\newtheorem{theorem}{Theorem}[section]
\newtheorem{lemma}[theorem]{Lemma}
\newtheorem{proposition}{Proposition}
\theoremstyle{definition}
\newtheorem{definition}[theorem]{Definition}
\newtheorem{remark}{Remark}
\newcommand{\ep}{\varepsilon}
\newcommand{\calg}{\mathcal{G}}
\newcommand{\calh}{\mathcal{H}}
\newcommand{\cala}{\mathcal{A}}
\newcommand{\calb}{\mathcal{B}}
\newcommand{\cald}{\mathcal{D}}
\newcommand{\W}{\mathcal{W}}
\newcommand{\bbc}{\mathbb{C}}
\newcommand{\bbn}{\mathbb{N}}
\newcommand{\bbz}{\mathbb{Z}}
\newcommand{\lip}{\langle}
\newcommand{\rip}{\rangle}
\newcommand{\lotg}{{L^2([0,T];\calg)}}
\newcommand{\hkg}{{L^2([0,T];\calg)}}
\newcommand{\hkpg}{{H^{1}([0,T];\calg)}}
\DeclareMathOperator{\sgn}{sgn}
\DeclareMathOperator{\re}{Re}
\numberwithin{equation}{section}
\title[Viscoelastic Control and Source] 
      {Source Reconstruction and Stability via Boundary Control of Abstract Viscoelastic Systems}
\author[Walton Green and Shitao Liu]{}
\subjclass{Primary: 93B07, 35R30; Secondary: 74D05, 35L10.}
 \keywords{inverse source problem, viscoelasticity, boundary control, observability, reconstruction formula, moment method.}
 \email{awgreen@clemson.edu}
 \email{liul@clemson.edu}
\thanks{The second author is supported in part by a Clemson Support for Early Exploration and Development (CU SEED) grant.}
\thanks{$^*$ Corresponding author: Walton Green}
\begin{document}
\maketitle

\centerline{\scshape Walton Green and Shitao Liu}
\medskip
{\footnotesize
 \centerline{School of Mathematical and Statistical Sciences}
   \centerline{Clemson University}
   \centerline{ Clemson, SC 29634, USA}
} 

\bigskip


\begin{abstract}
We study the inverse source problem for a class of viscoelastic systems from a single boundary measurement in a general spatial dimension. We give specific reconstruction formula and stability estimate for the source in terms of the boundary measurement. Our approaches rely on the exact boundary controllability of the corresponding viscoelastic systems for which we also provide a new proof based on a modification of the well-known moment method.
\end{abstract}

\section{Introduction and Main Results}

In this paper, we investigate the exact boundary controllability of viscoelastic systems in a general spatial dimension and apply it to solving the inverse problem of reconstructing a source term in the system by a single boundary measurement. We consider a general setting and give reconstruction formulas as well as stability estimates for the source term. The motivation of our paper is \cite{pandolfi13} in which a similar problem is studied, but only for the viscoelastic wave equation in one spatial dimension, due to the nature of the proof for the corresponding controllability. Based on our recent work \cite{green2019boundary}, in this paper we provide a new proof of the exact controllability for an abstract viscoelastic system in an arbitrary spatial dimension that could accommodate more general settings. After establishing the controllability of the system, we give a reconstruction formula and stability estimates for the inverse problem of recovering a source term in the system from a single boundary measurement. The particular method we use for the source reconstruction based on the boundary controllability of the system is originated by Yamamoto in \cite{yamamoto95} for hyperbolic equations.

To formulate our problem, let $\mathcal{G}$, $\mathcal{H}$ be Hilbert spaces and let
$\cala: \cald(\cala)\subset\calh \to \calh$ be a self-adjoint operator satisfying appropriate assumption (A) below.
We consider the following viscoelastic system for $w: [0,T] \to \calh$:
	\begin{equation} \label{eq:wave}\left\{ \begin{array}{lr}
	\displaystyle w''(t)+\cala w(t) = \int_0^t M(t-s) \cala w(s) \, ds
	& t\in [0,T]\\[3mm]
	w(0)=w_0\quad w'(0)=w_1 \\[1mm]
	\end{array} \right.
	\end{equation}
with the memory kernel $M \in H^2(0,T)$, and $w_0, w_1$ being the initial conditions. 
In this abstract setup, the boundary conditions will be contained in $\cald(\cala)$. We also introduce the observation operator $\calb : \cald(\calb) \subset \calh \to \calg$ satisfying the assumption (B) below.
More precisely, throughout this paper we will assume the following conditions on the linear operators $\cala$ and $\calb$:
\begin{itemize}
	\item[(A)] Let $\cala : \cald(\cala) \subset \calh \to \calh$ be self-adjoint, closed with dense range, having compact resolvent, and {\it semibounded}, i.e., 
	\[ \lip \cala u,u \rip \ge -c\|u\|^2, \]
	for some $c>0$ and all $u \in \cald(\cala)$. We will denote by $\calh^1$ the completion of $\cald(\cala)$ with respect to the norm $\|x\|_1^2 := \|x\|^2 + \| |\cala|^{1/2}x\|^2 $.
\medskip
	\item[(B)] Let $\calb : \cald(\calb) \subset \calh \to \calg $ be closed with dense range that satisfies the following {\it observability-regularity} inequality: There exists $T_0 \ge 0$ such that for any $T > T_0$, there exists $C>0$ such that for $w$ satisfying (\ref{eq:wave}) with $M=0$,
	\begin{equation}\label{eq:ob} C^{-1}\|(w_0,w_1)\|_{\calh^1 \times \calh} \le  \|\calb w\|_\hkg \le C \|(w_0,w_1)\|_{\calh^1 \times \calh} \end{equation}
	for all $(w_0,w_1) \in \calh^1\times\calh$ (In the rest of the paper we also use the form $\|\calb w\|_\hkg \asymp \|(w_0,w_1)\|_{\calh^1 \times \calh}$ at times to denote a two-sided inequality like (\ref{eq:ob})).
	\end{itemize}

The condition (A) above is standard, satisfied by most unbounded, self-adjoint, elliptic differential operators on bounded domains. See below for a few concrete examples. We make a few remarks, however, about the condition (B). First, contained in the lower inequality of (\ref{eq:ob}) is the observability inequality of the system (\ref{eq:wave}) \textit{without the memory term}. In this paper, we do not claim to have any new insights concerning this problem. We refer the readers to the extensive literature concerning the controllability and observability of hyperbolic type equations and systems.

We briefly summarize two cases in which the conditions (A) and (B) are both satisfied (therefore our Theorems \ref{thm:1}-\ref{thm:formula} below apply). First, the Dirichlet viscoelastic wave equation defined on an open bounded domain $\Omega$ with smooth boundary where $\cala = -\Delta + q$ with $q$ being a bounded potential. Taking $\calb$ as the Neumann trace on a suitable portion of boundary $\partial\Omega$ (see \cite{b-l-r} for sharp conditions), the condition (B) is well known to be satisfied, see for example \cite{lasiecka86,lions88,zhang00}, with $\calh = L^2(\Omega)$.

Another case to which our result applies is the viscoelastic plate equation where $\cala = \Delta^2$ with Dirichlet boundary condition. It has been considered in \cite{lasiecka-plate} under a smallness assumption on the memory kernel and in \cite{pandolfi-plate} in dimension two. The observability-regularity inequality (\ref{eq:ob}) can be found in \cite[Remark 1.3]{lasiecka89euler} when $\calb$ is the third-order boundary trace and $\calh = H^1_0(\Omega)$. As shown in \cite{lions88,zuazua87}, (\ref{eq:ob}) still holds for the second-order boundary trace with $\calh = L^2(\Omega)$. Moreover, in both cases, $T_0=0$ so the viscoelastic plate equation we consider is still observable in arbitrary time $T>0$.

A point of interest for us is the \textit{Neumann viscoelastic} control and observation problem (e.g., take $\cala$ to be the Neumann Laplacian). To the best of our knowledge, this has not been studied in the literature, and it would be a consequence of our Theorem \ref{thm:1} below, except that it is not known if there are suitable spaces $\calh$, $\calg$ and operator $\calb$ satisfying the condition (B). For the natural choice of $\calb$ as the Dirichlet trace in the case of wave equations, the closest to (\ref{eq:ob}) to our best knowledge is Theorem 2.1.1 in \cite{ltz} for the lower inequality and Theorem 1.1 in \cite{lt89neumann} for the upper inequality.

In this paper we will first study the {\it exact observability} of the system (\ref{eq:wave}) (with $M\neq 0$). That is, 
we are interested in whether the phenomenon $w$ can be observed by the operator $\calb$, i.e. proving the {\it observability inequality}
	\begin{equation}\label{1.5} \|(w_0,w_1)\|_{\calh^1 \times \calh} \le C \|\calb w\|_\hkg \end{equation}
for some $C,T>0$ independent of $(w_0,w_1)$. By duality, it is well known that the exact observability is equivalent to the exact controllability of the dual system to (\ref{eq:wave}) \cite{lions88,russell78}. 
Once we establish the observability/controllability, we will apply that to solve the corresponding inverse source problem from a single measurement. 

Solving an inverse problem through the observability/controllability of the underlying system is a well established technique and has produced various methods in inverse problems. In particular, the celebrated Boundary Control method pioneered by Belishev \cite{belishev87} which deals with the so-called many measurements formulation \cite{isakov06}. For our inverse problem with a single measurement formulation, we refer to \cite{Liu-T13} and references therein.  

Inverse source problems for partial differential equations have also been studied extensively in the literature \cite{bellassoued17,isakov06}. For the viscoelastic inverse problem considered in this paper, \cite{cavaterra06} and \cite{loreti17} studied more general viscoelastic equations and showed similar stability estimates by means of Carleman estimates. However, their method does not produce the reconstruction formula as we have in Theorem \ref{thm:formula}.

On the other hand, the controllability of viscoelastic systems has been well studied and we refer to the book \cite{pandolfi-book} for the extensive literature. However, most of the available results are limited to the case where the spatial dimension is one, with the exceptions being \cite{kim93,pandolfi15}.
This is largely due the fact that in one dimension solutions of (\ref{eq:wave}) may be approximated by sums of complex exponentials $\{e^{int}\}$ which are very well understood.
For instance, the treatment in \cite{avdonin13,loreti12} follows this approach using the well known \textit{moment method} of Russell \cite{russell78}. 

In our recent work \cite{green2019boundary}, we provided a new proof of (\ref{1.5}) in the special case where $\cala$ is given by the Dirichlet Laplacian on an open bounded domain with smooth enough boundary, and $\calb$ is given by the Neumann observation operator from a part of the boundary. The main idea is to view the viscoelastic wave equation as a perturbation of the standard wave equation and show the perturbed harmonic system forms a Riesz sequence. In Section \ref{sec:perturb} below, we prove the analogous result for our abstract viscoelastic system (\ref{eq:wave}) that works for more general settings. 
In other words, we prove that whenever the unperturbed system, namely (\ref{eq:wave}) with $M \equiv 0$, is exactly observable, so is (\ref{eq:wave}) for any $M \in H^2(0, T)$. More precisely, we will show the following:

\begin{theorem}\label{thm:1}
Assume conditions (A) and (B) are satisfied. Let $M \in H^2(0,T)$. Then, for any $T > T_0$, there exists $C>0$ such that (\ref{1.5}) holds for any $(w_0,w_1) \in \calh^1 \times \calh$.
\end{theorem}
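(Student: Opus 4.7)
The plan is to reduce observability of the full viscoelastic system to that of the memoryless equation by combining a spectral expansion with a Volterra perturbation argument. Since $\mathcal{A}$ is self-adjoint with compact resolvent and semibounded, it admits a real orthonormal eigenbasis $\{\phi_n\}$ of $\mathcal{H}$ with eigenvalues $\lambda_n \to +\infty$. Projecting (\ref{eq:wave}) onto each eigenfunction, the Fourier coefficient $w_n(t) := \langle w(t),\phi_n\rangle$ satisfies the scalar Volterra integro-differential equation
\[ w_n''(t) + \lambda_n w_n(t) = \lambda_n \int_0^t M(t-s) w_n(s)\,ds, \]
with the induced initial data. Taking the Laplace transform yields the characteristic function $P_n(s) = s^2 + \lambda_n(1 - \widehat{M}(s))$, whose roots are the perturbed frequencies of the $n$-th mode.

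For $\lambda_n$ sufficiently large, I would locate the roots of $P_n$ as a perturbation of $s^2 + \lambda_n$ via Rouch\'e's theorem, using $M \in H^2(0,T)$ to control the decay of $\widehat{M}$ along imaginary directions. This produces roots $\pm i\mu_n^\pm$ with $\mu_n^\pm = \pm\sqrt{\lambda_n} + \delta_n^\pm$ where $\delta_n^\pm \to 0$ as $n\to\infty$, together with finitely many anomalous roots from the small eigenvalues which must be treated separately. Inverting the Laplace transform then yields a representation
\[ w(t) = \sum_n \bigl(\alpha_n^+ e^{i\mu_n^+ t} + \alpha_n^- e^{i\mu_n^- t}\bigr)\phi_n, \]
with coefficients $\{\alpha_n^\pm\}$ comparable to the Fourier coefficients of $(w_0,w_1)$ in $\mathcal{H}^1 \times \mathcal{H}$, so that applying $\mathcal{B}$ gives
\[ \mathcal{B}w(t) = \sum_n \bigl(\alpha_n^+ e^{i\mu_n^+ t} + \alpha_n^- e^{i\mu_n^- t}\bigr)\mathcal{B}\phi_n. \]

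The decisive step is to show that the operator-valued family $\{e^{i\mu_n^\pm t}\mathcal{B}\phi_n\}$ is a Riesz sequence in $L^2([0,T];\mathcal{G})$ for every $T > T_0$. Specialized to $M=0$, assumption (B) says precisely that the unperturbed family $\{e^{\pm i\sqrt{\lambda_n}t}\mathcal{B}\phi_n\}$ is a Riesz sequence whose Gram matrix is equivalent to the weighted identity implementing $\|(w_0,w_1)\|_{\mathcal{H}^1 \times \mathcal{H}}^2$. A Paley--Wiener / Kadec-type stability theorem for Riesz bases of complex exponentials, asserting that sufficiently small shifts of the exponents preserve the Riesz property, would then transfer the two-sided bound to the perturbed family after splitting off the finitely many low modes. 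Inserting this bound into the expansion of $\mathcal{B}w$ yields the desired observability inequality (\ref{1.5}).

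The main obstacle I anticipate is adapting the classical scalar Kadec-type perturbation theorem to the present operator-valued setting, since the ``basis vectors'' $\mathcal{B}\phi_n \in \mathcal{G}$ are themselves elements of a Hilbert space and the natural weighting is dictated by the mixed $\mathcal{H}^1 \times \mathcal{H}$ norm rather than the unweighted $\ell^2$ norm. A second technical subtlety is the treatment of low-frequency modes and of any near-coalescent pairs $\mu_n^+ \approx -\mu_n^-$, where the asymptotic perturbation estimate degenerates; these contribute a finite-dimensional subspace whose observability must be verified directly or absorbed through a compactness--uniqueness argument exploiting injectivity of the solution map.
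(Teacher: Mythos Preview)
Your overall architecture---spectral decomposition, then showing the resulting operator-valued family is a Riesz sequence in $L^2([0,T];\mathcal{G})$ by perturbation from the memoryless case---is exactly the paper's. However, the technical route you propose through Laplace transforms and characteristic roots has a real gap, and the obstacle you flag at the end is precisely the one the paper has to do work to resolve.

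The representation $w_n(t) = \alpha_n^{+} e^{i\mu_n^{+} t} + \alpha_n^{-} e^{i\mu_n^{-} t}$ is not available for the Volterra integro-differential equation with a general kernel $M\in H^2(0,T)$. First, $M$ is only given on $[0,T]$, so $\widehat{M}(s)$ is not canonically defined; second, even after extension, $P_n(s)=s^2+\lambda_n(1-\widehat{M}(s))$ typically has more than two zeros (already three for $M$ a single exponential, and in general a complicated zero set or branch structure), so the Bromwich inversion does not reduce to two residues. The paper avoids this entirely: it keeps the exact ODE solutions $z_n(t)$ and proves the $L^2$ estimate
\[
\int_0^T \bigl|z_n(t)-e^{(\gamma+i\lambda_n)t}\bigr|^2\,dt \le \frac{C}{\lambda_n^{2}}, \qquad \gamma=\tfrac{M(0)}{2},
\]
directly from the integral equation. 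The crucial point is that the shift $\gamma$ is \emph{the same for every $n$}, so $\{e^{(\gamma+i\lambda_n)t}\psi_n\}$ is trivially a Riesz sequence once $\{e^{i\lambda_n t}\psi_n\}$ is; no genuine Kadec-type stability theorem is needed at this step.

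The operator-valued difficulty you correctly anticipate is handled by a separate device (Lemma~\ref{psib}): from the Bessel half of assumption (B) alone one extracts the pointwise-in-time bound
\[
\Bigl\|\sum_n a_n\psi_n\Bigr\|_{\mathcal{G}}^{2}\le C\Bigl(\varepsilon^{-1}\sum_n |a_n|^2+\varepsilon\sum_n |\lambda_n a_n|^2\Bigr),
\]
valid for all $\varepsilon\in(0,T]$. Combined with the $O(\lambda_n^{-1})$ approximation above, this yields the Paley--Wiener smallness condition (Lemma~\ref{perr}) for the tail $|n|\ge N$. This lemma is the substitute for a vector-valued Kadec theorem, and it is where the $\mathcal{H}^1\times\mathcal{H}$ weighting enters naturally. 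Finally, the low modes are not treated by compactness--uniqueness but by a direct $\ell^2$-independence argument (Proposition~\ref{ltwoind}): one differentiates a putative null relation $\sum a_n z_n\psi_n=0$ twice, uses the Volterra uniqueness to strip off a factor $\lambda_n^{2}$, and iterates to annihilate the finitely many low-frequency coefficients one eigenvalue at a time; a short lemma (Lemma~\ref{ltwo}) then upgrades ``Riesz for $|n|\ge N$'' plus ``$\ell^2$-independent'' to ``Riesz for all $n$''.
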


After establishing Theorem~1.1, the second problem we study is the reconstruction and stability of an unknown source $f \in \calh$ from the observed data $\calb u\in\calg$ in the following system: let $u : [0,T] \to \calh$ satisfy
\begin{equation} \label{eq:source}\left\{ \begin{array}{lr}
	\displaystyle u''(t)+\cala u(t) = \int_0^t M(t-s) \cala u(s) \, ds + \sigma(t)f 
	& \mbox{in } [0,T]\\[3mm]
	u(0)=0\quad u'(0)=0.  \\[1mm]
	\end{array} \right.
	\end{equation}

The stability estimate of recovering $f$ is given by 
\begin{theorem}\label{thm:stability}
Assume conditions (A) and (B) are satisfied, $M \in H^2(0,T)$, $\sigma \in C^1[0,T]$ with $\sigma(0)\ne 0$, and $T>T_0$. Then there exists $C>0$ such that for any $f \in \calh$, $u$ satisfying (\ref{eq:source}),
	\begin{equation}\label{eq:stability} C^{-1}\|f\|_\calh \le \left\| \calb u \right\|_\hkpg \le C \|f\|_\calh . \end{equation}
\end{theorem}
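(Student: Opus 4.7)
The plan is to prove the upper and lower bounds in (\ref{eq:stability}) separately.

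The \textbf{upper bound} $\|\calb u\|_\hkpg \le C\|f\|_\calh$ is obtained by direct energy estimates. Testing (\ref{eq:source}) against $u'$, bounding the memory term with Cauchy--Schwarz, and invoking Gronwall's inequality yields $\sup_{t \in [0,T]} \|(u(t), u'(t))\|_{\calh^1 \times \calh} \le C\|f\|_\calh$. Combined with the hidden-regularity (upper) half of (\ref{eq:ob}), which extends from the $M = 0$ case to (\ref{eq:wave}) by the same perturbation framework underlying Theorem~\ref{thm:1}, this gives $\|\calb u\|_\hkg \le C\|f\|_\calh$. Applying the same reasoning to $u'$, which satisfies the modified equation derived below, yields $\|(\calb u)'\|_\hkg \le C\|f\|_\calh$ and hence the full $\hkpg$-bound.

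For the \textbf{lower bound}, the key device is to differentiate (\ref{eq:source}) in $t$ and exploit $\sigma(0) \ne 0$ to promote the source term to an initial velocity. Setting $v = u'$, applying Leibniz to the convolution, writing $\cala u(t) = \int_0^t \cala v(\tau)\, d\tau$ (using $u(0) = 0$), and using Fubini on the resulting $M'$-term, the $M(0)\cala u$ contribution cancels exactly, leaving
\[ v''(t) + \cala v(t) = \int_0^t M(t-s) \cala v(s)\, ds + \sigma'(t) f, \quad v(0) = 0, \quad v'(0) = \sigma(0) f, \]
where $v'(0) = u''(0) = \sigma(0) f$ comes from evaluating (\ref{eq:source}) at $t = 0$. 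Decompose $v = v_0 + \tilde v$: here $v_0$ solves (\ref{eq:wave}) with initial data $(0, \sigma(0) f)$, while $\tilde v$ solves the inhomogeneous Volterra equation with zero initial data and source $\sigma'(t) f$. Applying Theorem~\ref{thm:1} to $v_0$ gives
\[ |\sigma(0)|\, \|f\|_\calh \le C\, \|\calb v_0\|_\hkg \le C \bigl( \|(\calb u)'\|_\hkg + \|\calb \tilde v\|_\hkg \bigr). \]

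The \textbf{main obstacle} is absorbing $\|\calb \tilde v\|_\hkg$ on the right. The map $T \colon f \mapsto \calb \tilde v$ is bounded from $\calh$ to $\hkg$ by the same energy estimate, and is in fact \emph{compact}: the compact resolvent of $\cala$, together with the extra time-smoothness of $\tilde v$ afforded by its vanishing initial data, produces a gain of regularity. A standard compactness--uniqueness argument then reduces the lower bound to the uniqueness statement that $\calb u \equiv 0$ on $[0,T]$ implies $f = 0$; this uniqueness is itself established by the same differentiate-and-decompose trick together with Theorem~\ref{thm:1} and an iteration in time exploiting the Volterra structure and $\sigma(0) \ne 0$.
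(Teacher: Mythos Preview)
Your differentiation step and the identification $v'(0)=\sigma(0)f$ are correct, and the decomposition $v=v_0+\tilde v$ is a reasonable instinct. But the compactness--uniqueness detour is where the argument breaks down. Your claim that $f\mapsto \calb\tilde v$ is compact from $\calh$ to $\hkg$ is not justified by the reasons you give. The compact resolvent of $\cala$ yields compactness of the embedding $\calh^1\hookrightarrow\calh$, but the target space here is $L^2([0,T];\calg)$, and neither the time-smoothing effect of the zero initial data nor the spectral gap of $\cala$ produces any compactness in the $\calg$-direction. Concretely, $\tilde v$ solves exactly the same type of problem as $u$ itself, only with $\sigma$ replaced by $\sigma'$; by Duhamel one has $\calb\tilde v = V_{\sigma'}(\calb w)$, and the Volterra operator $V_{\sigma'}$ acting only on the time variable of $L^2([0,T];\calg)$ is \emph{not} compact when $\calg$ is infinite-dimensional (it is $V_{\sigma'}\otimes I_{\calg}$, and $I_{\calg}$ is not compact). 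So the absorption step as written fails.

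The paper avoids this entirely by a direct Volterra inversion. One first shows (Lemma~\ref{lemma:2}) that the solution $u$ of \eqref{eq:source} is simply the convolution $u(t)=\int_0^t\sigma(t-s)w(s)\,ds$, where $w$ solves \eqref{eq:wave} with data $(0,f)$. Differentiating gives $u'=\sigma(0)w+V_{\sigma'}w$, a Volterra equation of the second kind in $w$ with nonzero leading coefficient $\sigma(0)$. Its resolvent kernel $K\in C[0,T]$ furnishes the explicit inverse $\sigma(0)w=u'+V_K u'$, so that $\|\calb w\|_\hkg\asymp\|\calb u'\|_\hkg\asymp\|\calb u\|_\hkpg$. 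Combining with the two-sided observability $\|f\|_\calh\asymp\|\calb w\|_\hkg$ from Theorem~\ref{thm:1} (in its Riesz-sequence form) gives both inequalities in \eqref{eq:stability} at once. In effect, your $v_0$ and $\tilde v$ are exactly $\sigma(0)w$ and $V_{\sigma'}w$; rather than trying to absorb the second term by compactness, one inverts the sum $\sigma(0)I+V_{\sigma'}$ directly, which is automatic for Volterra operators and requires no uniqueness argument.
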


In addition, we give a reconstruction formula for $f$ in terms of the observation. More specifically, Let $\{\phi_n\}  \subset \calh$ be an orthonormal basis of $\calh$ given as eigenfunctions of $\cala$ (existence is guaranteed by condition (A)). Then we have
\begin{theorem}\label{thm:formula}
Under the assumptions of Theorem \ref{thm:stability}, there exists $\{\theta_n\} \subset \hkg$ such that
	\[ f = \sum_{n=1}^\infty \phi_n \left\lip \calb u' ,\theta_n \right\rip_\hkg \]
for $u$ satisfying (\ref{eq:source}). 
\end{theorem}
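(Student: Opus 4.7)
The plan is to extract the Fourier coefficients $\lip f,\phi_n\rip_\calh$ from the observation by building a biorthogonal family in $\hkg$, exploiting the two-sided bound already supplied by Theorem~\ref{thm:stability}. Define the bounded linear map $\Lambda:\calh\to\hkg$ by $\Lambda f := \calb u'$, where $u$ is the Duhamel solution of \eqref{eq:source} with source coefficient $f$. Because $u(0)=0$ and $\calb$ is closed and commutes with time differentiation on these solutions, the observation $\calb u\in\hkpg$ vanishes at $t=0$; Poincar\'e's inequality on $\{v\in\hkpg:v(0)=0\}$ then gives $\|\calb u\|_\hkpg\asymp\|(\calb u)'\|_\hkg$, and combining with Theorem~\ref{thm:stability} yields
\[ C^{-1}\|f\|_\calh \;\le\; \|\Lambda f\|_\hkg \;\le\; C\,\|f\|_\calh. \]

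With that two-sided bound, the rest is a standard Hilbert-space argument. The self-adjoint operator $\Lambda^*\Lambda:\calh\to\calh$ satisfies $\lip\Lambda^*\Lambda f,f\rip_\calh = \|\Lambda f\|_\hkg^2 \ge C^{-2}\|f\|_\calh^2$, so it is coercive and hence boundedly invertible on $\calh$. Setting
\[ \theta_n := \Lambda(\Lambda^*\Lambda)^{-1}\phi_n \in \hkg, \]
biorthogonality follows from $\lip\Lambda f,\Lambda(\Lambda^*\Lambda)^{-1}\phi_n\rip_\hkg = \lip f,\Lambda^*\Lambda(\Lambda^*\Lambda)^{-1}\phi_n\rip_\calh = \lip f,\phi_n\rip_\calh$. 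Expanding $f$ in the orthonormal eigenbasis $\{\phi_n\}$ of $\calh$ then gives
\[ f \;=\; \sum_{n=1}^\infty \phi_n\,\lip f,\phi_n\rip_\calh \;=\; \sum_{n=1}^\infty \phi_n\,\lip\calb u',\theta_n\rip_\hkg, \]
which is the claimed reconstruction formula. Each $\theta_n$ then has the concrete interpretation $\theta_n=\calb u_n'$, where $u_n$ solves \eqref{eq:source} with source $\sigma(t)(\Lambda^*\Lambda)^{-1}\phi_n$.

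The main input is really Theorem~\ref{thm:stability}; once the coercive lower bound on $\Lambda$ is available, the biorthogonal construction is entirely functional-analytic and presents no real obstacle. The only technical points I anticipate having to spell out are the regularity identifications $\calb u(0)=0$ and $\calb u'=(\calb u)'$, both of which follow from the closedness of $\calb$ and the Duhamel representation of $u$, since it is precisely these that allow the $\hkpg$-norm appearing in Theorem~\ref{thm:stability} to be replaced by the $\hkg$-norm of $\calb u'$ and thus produce $\theta_n$ in $\hkg$ rather than in $\hkpg$.
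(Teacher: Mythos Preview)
Your argument is correct and takes a genuinely different route from the paper. You work abstractly: once Theorem~\ref{thm:stability} gives the two-sided bound, you package the observation map $f\mapsto\calb u'$ as an isomorphism $\Lambda$ onto its range and read off the biorthogonal system via $\theta_n=\Lambda(\Lambda^*\Lambda)^{-1}\phi_n$. The only points needing care are the identifications $\calb u(0)=0$ and $(\calb u)'=\calb u'$, and both follow cleanly from the Duhamel representation $u(t)=\int_0^t\sigma(t-s)w(s)\,ds$ with $\calb w\in\hkg$ (Lemma~\ref{lemma:2}), so the Poincar\'e step is legitimate. The paper, by contrast, proceeds constructively through the spectral decomposition: it shows $\{w_n\psi_n\}_{n\in\bbn}$ (with $w_n=(z_n-z_{-n})/2i$) is a Riesz sequence, builds its biorthogonal family $p_k=(\W^{-1})^*\phi_k$, and then sets $\theta_k=\sigma(0)^{-1}(I+V_K^*)p_k$ where $K$ is the Volterra resolvent kernel of $\sigma'/\sigma(0)$, verifying $\lip\calb u',\theta_k\rip=\lip f,\phi_k\rip$ by direct computation. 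Your approach is shorter and makes transparent that Theorem~\ref{thm:formula} is essentially a formal corollary of Theorem~\ref{thm:stability}; the paper's approach buys an explicit, computable formula for $\theta_k$ in terms of the eigenfunctions, the time components $z_n$, and the scalar kernel $K$, which is closer in spirit to an actual reconstruction algorithm.
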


The rest of the paper is organized as follows. First, the proof of Theorem \ref{thm:1} and the reformulation via the moment method are presented in Section \ref{sec:perturb}. Section \ref{sec:inverse} relates the inverse source problem to the observed system and Theorems \ref{thm:stability} and \ref{thm:formula} are proved.

\section{Observability of the Viscoelastic System}\label{sec:perturb}

\subsection{Spectral Reformulation}
\label{harm}
By condition (A), $\cala$ has an orthonormal basis of eigenfunctions $\{\phi_n\}_{n=1}^\infty$ for $\calh$ with eigenvalues $\{\mu_n\}_{n=1}^\infty \subset [-c,\infty)$, each of finite multiplicity with $\mu_n \to \infty$. 
Set $\lambda_n = \sgn(n)\sqrt{\mu_{|n|}}$ for each $n \in \bbz \backslash \{0\} =:\bbz_0$. Since $\cala$ is semibounded, $|\Im(\lambda_n)| \le c$ for all $n \in \bbz_0$. We divide $\{\lambda_n\}_{n \in \bbz_0}$ into two classes, indexed by
	\[ J_0 = \{ n \in \bbz_0 : \lambda_n=0\}, \quad J_1 = \bbz_0 \backslash J_0. \]
Then, define
	\begin{equation}\label{psidef} 
		\psi_n = \left\{ \begin{array}{cl} \sgn(n)\calb\phi_{|n|} &\mbox{ for } n \in J_0, \\[3mm]
			  \dfrac{\calb  \phi_{|n|}}{\lambda_n} &\mbox{ for } n \in J_1. \end{array} \right.
	 \end{equation}

The second system of interest is the time component of solutions to (\ref{eq:wave}). For $n \in J_1$, let $z_n$ satisfy the following ordinary differential equation:
		\begin{equation} \left\{ \begin{array}{lr}
		z_n''(t) + \lambda_n^2 z_n(t) = -\lambda_n^2 \displaystyle\int_0^t M(t-s) z_n(s) \, ds & \hspace{4ex}t \in [0,T]\\[3mm]
		z_{n}(0) = 1 \quad \quad z_{n}'(0) = i\lambda_n \\[2mm]
		\end{array} \right. \label{viscoode}
		\end{equation}
For $n \in J_0$, set $z_n(t) = 1+i\sgn(n)t$. Notice when $M \equiv 0$, $z_n(t) = e^{i\lambda_nt}$. We also understand $e^{i\lambda_nt}$ to be $1+i\sgn(n)t$ if $\lambda_n=0$.
The main result of this section is that $\{z_n\psi_n\}$ forms a Riesz sequence in $\lotg$ if $\{e^{i\lambda_n t}\psi_n\}$ does. By Proposition \ref{prop1} below, this is equivalent to Theorem \ref{thm:1}.

\begin{definition}{\em \cite{avdonin95,young01}}\label{def:riesz}
A sequence $\{f_n\}$ in a Hilbert space $\calh$ is said to be a \textit{Riesz sequence} if there exists constants $c,C>0$ such that
	\begin{equation}\label{rfseq}
		c \sum |a_n|^2 \le \left\| \sum a_nf_n \right\|^2 \le C \sum |a_n|^2
	\end{equation}
for all finite sequences $\{a_n\} \subset \bbc$.
In the case when the lower or upper inequality holds, $\{f_n\}$ is said to be a \textit{Riesz-Fischer} or \textit{Bessel} sequence, respectively.
\end{definition}
We remark that throughout the paper, when we say a finite sequence, we mean a sequence with only finitely many non-zero entries.

We now state the relationship between Riesz sequences and the observability-regularity inequality (\ref{eq:ob}).

\begin{proposition}\label{prop1}
The inequality (\ref{eq:ob}) holds for all $(w_0,w_1) \in \calh^1 \times \calh$ and $w$ satisfying (\ref{eq:wave}) if and only if $\{z_n\psi_n\}_{n \in \bbz_0}$, defined by (\ref{psidef}) and (\ref{viscoode}), is a Riesz sequence in $\hkg$, i.e. there exists $c,C >0$ such that
	\begin{equation}\label{riesz} c \sum |a_n|^2 \le \int_0^T \left\| \sum a_n z_n(t)\psi_n\right\|^2_\calg \, dt \le C \sum |a_n|^2\end{equation}
for all finite sequences $\{a_n\} \subset \bbc$.
\end{proposition}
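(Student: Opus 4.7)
My plan is to diagonalize (\ref{eq:wave}) via the eigenbasis $\{\phi_m\}_{m=1}^\infty$ of $\cala$, identify each scalar time component of the solution as a specific combination of $z_n$ and $z_{-n}$, and then translate the observation $\calb w$ into a series in the putative Riesz sequence $\{z_n\psi_n\}$.

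Concretely, I would first expand $w_0=\sum_m\alpha_m\phi_m$ and $w_1=\sum_m\beta_m\phi_m$ and write $w(t)=\sum_m c_m(t)\phi_m$, where each scalar coefficient $c_m$ solves the projected Volterra equation $c_m''+\mu_m c_m = \mu_m\int_0^t M(t-s)c_m(s)\,ds$ with initial data $(\alpha_m,\beta_m)$. For each $m$ with $\mu_m\ne 0$ the pair $z_m, z_{-m}$ defined by (\ref{viscoode}) has Wronskian $-2i\lambda_m\ne 0$ at $t=0$, so $c_m$ can be written uniquely as $a_m z_m + a_{-m} z_{-m}$ with $a_{\pm m}$ linear in $(\alpha_m,\beta_m)$; for $\mu_m=0$ the same decomposition holds using the polynomial solutions. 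Combining this with the identities $\calb\phi_m = \lambda_m \psi_m$ (when $\pm m \in J_1$) or $\calb\phi_m=\psi_m$ (when $\pm m\in J_0$) and the antisymmetry $\psi_{-n}=-\psi_n$ inherent in (\ref{psidef}), one arrives at a formal identity $\calb w(t)=\sum_{n\in\zo} b_n z_n(t)\psi_n$ with explicit coefficients $b_n$ linear in $(\alpha_{|n|},\beta_{|n|})$.

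The key bookkeeping step is to verify $\sum_n |b_n|^2\asymp \|(w_0,w_1)\|_{\calh^1\times\calh}^2$. A short computation yields $|b_m|^2+|b_{-m}|^2=(|\mu_m||\alpha_m|^2+|\beta_m|^2)/2$ when $\mu_m\ne 0$ and $(|\alpha_m|^2+|\beta_m|^2)/2$ when $\mu_m=0$, which already matches the $\||\cala|^{1/2}w_0\|^2+\|w_1\|^2$ part of the target norm. The residual $\sum_{\mu_m\ne 0}|\alpha_m|^2$ piece is then absorbed by using that $\cala$ has compact resolvent and dense range, so $|\mu_m|\geq\delta>0$ uniformly over the nonzero modes and $\sum_{\mu_m\ne 0}|\alpha_m|^2 \le \delta^{-1}\sum_{\mu_m\ne 0}|\mu_m||\alpha_m|^2$. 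With this norm equivalence in hand, both directions of the biconditional follow by reading (\ref{riesz}) as a statement about $\|\calb w\|_\hkg^2$: starting from a solution $w$ delivers (\ref{eq:ob}); conversely, given a finite sequence $\{b_n\}$ one inverts the finite-dimensional linear correspondence to produce initial data $(w_0,w_1)$, and (\ref{eq:ob}) combined with the equivalence yields (\ref{riesz}).

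The main obstacle I anticipate is justifying the identity $\calb w=\sum_n b_n z_n\psi_n$ in $\hkg$ for general $(w_0,w_1)\in\calh^1\times\calh$ rather than merely for finite expansions: the series must converge unconditionally in $\hkg$ and coincide with $\calb w$. For initial data in the finite span of the $\phi_m$ this is immediate from linearity and superposition, and the general case should follow by a density argument exploiting the upper bound in (\ref{eq:ob}) extended to the memory case via the Volterra perturbation mechanism that underlies Theorem~\ref{thm:1}.
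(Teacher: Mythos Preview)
Your approach is essentially the same as the paper's: expand the initial data in the eigenbasis, solve the projected scalar Volterra ODE for each mode, express the scalar solution in the fundamental system $\{z_m,z_{-m}\}$, and then verify the $\ell^2$ equivalence $\sum_n|b_n|^2\asymp\|(w_0,w_1)\|_{\calh^1\times\calh}^2$ using the spectral gap $\min_{n\in J_1}|\lambda_n|^2>0$. The paper records exactly the same representation (its formula (\ref{dualrep})) and the same coefficient computation, so there is nothing substantively different to compare.

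One point in your final paragraph deserves correction, however. You suggest that passing from finite expansions to general $(w_0,w_1)\in\calh^1\times\calh$ requires ``the upper bound in (\ref{eq:ob}) extended to the memory case via the Volterra perturbation mechanism that underlies Theorem~\ref{thm:1}.'' That would be circular: Proposition~\ref{prop1} is the bridge through which Theorem~\ref{thm:1} is reduced to a Riesz-sequence statement, so you cannot invoke the latter here. Fortunately you do not need to. In the direction ``(\ref{riesz}) $\Rightarrow$ (\ref{eq:ob})'', the assumed Bessel (upper) inequality in (\ref{riesz}) already gives, for finite data, $\|\calb w\|_{\hkg}^2\le C\sum|b_n|^2\le C'\|(w_0,w_1)\|_{\calh^1\times\calh}^2$; this is precisely the continuity you need, and it extends to general data by density of finite eigen-expansions. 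In the opposite direction ``(\ref{eq:ob}) $\Rightarrow$ (\ref{riesz})'', the Riesz inequality (\ref{riesz}) is only asserted for \emph{finite} sequences $\{a_n\}$, so you invert the (finite-dimensional) correspondence to obtain finite initial data, where the identity $\calb w=\sum_n b_n z_n\psi_n$ is exact and no density step is required at all. With this adjustment your argument is complete and matches the paper's.
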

\begin{proof}Let $(w_0,w_1) \in \calh^1 \times \calh$.
We will represent the solution $w$ to (\ref{eq:wave}) by separation of variables. In the space variable, we expand onto $\{\phi_n\}$. There exist $\{\xi_n\}, \{\eta_n\} \in \ell^2$ such that
	\[ w_0 = \sum_{n=1}^\infty \xi_n \phi_n, \quad \quad w_1 = \sum_{n=1}^\infty \eta_n \phi_n. \]
Since $w_0 \in \calh^1$, by the orthonormality of $\{\phi_n\}$,
	\[ \||\cala|^{1/2} w_0\|^2 = \lip w_0,|\cala| w_0 \rip = \left\lip \sum_{n=1}^\infty \xi_n \phi_n , \sum_{n=1}^\infty |\lambda_n^2|\xi_n \phi_n \right\rip = \sum_{n=1}^\infty |\lambda_n \xi_n|^2 \]
therefore $\{ \lambda_n \xi_n \}_{n=1}^\infty\in \ell^2$. Set $\tilde \xi_n = \lambda_n \xi_n$ for $n \in J_1$. Then,
	\[ w_0 = \sum_{n \in J_0 \cap \bbn} \xi_n \phi_n + \sum_{n \in J_1 \cap \bbn} \dfrac{\tilde \xi_n}{\lambda_n} \phi_n. \]
Additionally, we consider the ODE (\ref{viscoode}) to account for the time variable. One can then verify that
	\begin{align}\notag w(t) = \dfrac 12 \sum_{n \in J_1 \cap \bbn} &\left[ \left(\dfrac{\tilde \xi_n}{\lambda_n} - i \dfrac{\eta_n}{\lambda_n}\right) z_n(t) - \left(\dfrac{\tilde \xi_n}{\lambda_{-n}} +i\dfrac{\eta_n}{\lambda_{-n}}\right)z_{-n}(t) \right]\phi_{n} \\
		\label{dualrep}&+ \dfrac 12 \sum_{n \in J_0 \cap \bbn} \left[(\xi_n - i\eta_n)z_n(t)+(\xi_n+i\eta_n)z_{-n}(t)\right] \phi_{n}.\end{align}
Then, setting 
	\[ a_n = \left\{ \begin{array}{cl} \sgn(n)\tilde \xi_{|n|} - i\eta_{|n|} & \mbox{for } n \in J_1, \\[2mm]
			\sgn(n)\xi_{|n|}-i\eta_{|n|} &\mbox{for } n \in J_0, \end{array}\right. \]
we have
	\begin{align*} \sum_{n \in \bbz_0} |a_n|^2 &= \sum_{n \in J_0} |\xi_{|n|}|^2 + \sum_{n \in J_1} |\tilde \xi_{|n|}|^2 + \sum_{ n \in \bbn} |\eta_{|n|}|^2 \\
					&\le 2\left( \sum_{n=1}^\infty |\xi_n|^2 + \sum_{n=1}^\infty |\lambda_n \xi_n|^2 + \sum_{n=1}^\infty |\eta_n|^2\right) \\
					&=2 (\|w_0\|^2 + \||\cala|^{1/2}w_0\|^2 + \|w_1\|^2 )\\[2mm]
					&=2( \|w_0\|_1^2 + \|w_1\|^2).
	\end{align*}
To establish the lower inequality ($\displaystyle\sum_{n \in \bbz_0} |a_n|^2 \ge c \|(w_0,w_1)\|^2_{\calh^1\times \calh}$), notice that
	\[ \sum_{n \in J_1 \cap \bbn} | \tilde \xi_n|^2 = \dfrac 12 \sum_{\bbz_0 \cap \bbn} |\lambda_n \xi_n|^2 + \dfrac 12 \sum_{J_1 \cap \bbn} |\lambda_n \xi_n|^2 \ge \dfrac 12 \||\cala|^{1/2}w_0\|^2 + \dfrac{\displaystyle\min_{n\in J_1}\{|\lambda_n|^2\}}{2} \sum_{J_1 \cap \bbn} |\xi_n|^2. \]
The minimum exists and is non-zero since the only accumulation point of the spectrum of $\cala$ is $\infty$. Therefore,
	\[ \sum |a_n|^2 \asymp \|w_0\|^2_1 + \|w_1\|^2. \]
Using the representation (\ref{dualrep}), we see that $\calb w = \tfrac 12 \sum a_n z_n \psi_n$ (recall $\psi_n$ from (\ref{psidef})) which completes the proof.
\end{proof}

The special case $M=0$ ($z_n(t)=e^{i\lambda_nt}$) shows that our assumption (B) is equivalent to the fact that $\{e^{i\lambda_nt}\psi_n\}$ is a Riesz sequence in $\lotg$.

\subsection{Proof of Theorem \ref{thm:1}}\label{main}
In light of Proposition \ref{prop1}, the following theorem is equivalent to Theorem \ref{thm:1}.

\begin{theorem}\label{viscoriesz}
Let $T_0\ge 0$ be such that $\{e^{i\lambda_nt}\psi_n\}$ is Riesz sequence in $\hkg$ for any $T>T_0$. Then, $\{z_n\psi_n\}$ forms a Riesz sequence in $\hkg$ for any $T > T_0$.
\end{theorem}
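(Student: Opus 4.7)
The plan is to realize $\{z_n \psi_n\}$ as a compact perturbation of the Riesz sequence $\{e^{i\lambda_n t}\psi_n\}$ and then to invoke a classical perturbation principle: a compact perturbation of a Riesz sequence that remains $\omega$-independent is itself a Riesz sequence. The quantitative side of the argument rests on the regularity hypothesis $M \in H^2(0,T)$, which should be used to show that the difference between $z_n$ and $e^{i\lambda_n t}$ is small in the high-frequency limit.

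The first step is to set $r_n(t) = z_n(t) - e^{i\lambda_n t}$ for $n \in J_1$; the set $J_0$ has only finitely many elements and may be absorbed by a finite-rank modification. Substituting into (\ref{viscoode}) and using that $e^{i\lambda_n t}$ solves the homogeneous equation yields
\begin{equation*}
r_n''(t) + \lambda_n^2 r_n(t) = -\lambda_n^2 \int_0^t M(t-s)\bigl[e^{i\lambda_n s} + r_n(s)\bigr]\,ds, \qquad r_n(0) = r_n'(0) = 0.
\end{equation*}
Duhamel's formula rephrases this as a Volterra integral equation for $r_n$. The forcing carries a nominal prefactor $\lambda_n^2$, but two integrations by parts in the convolution $M * e^{i\lambda_n \cdot}$ transfer those two powers of $\lambda_n$ onto two derivatives of $M$; under $M \in H^2(0,T)$ this produces a bound on the forcing that is uniform in $n$ modulo harmless boundary terms. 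A Gronwall estimate closes the Volterra iteration and delivers a decay estimate of the form $\|r_n\|_{L^2(0,T)} = O(|\lambda_n|^{-1})$ as $|\lambda_n| \to \infty$.

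Combined with the uniform bound $\sup_n \|\psi_n\|_\calg < \infty$, which follows from the Bessel half of (\ref{eq:ob}) applied to single eigenfunction initial data, this decay shows that the synthesis map $(a_n) \mapsto \sum a_n r_n \psi_n$ from $\ell^2$ into $\lotg$ is compact (either directly Hilbert--Schmidt, or after a tail truncation to reach that regime depending on the growth of $\mu_n$). Applying the perturbation principle reduces the theorem to verifying $\omega$-independence of $\{z_n \psi_n\}$ in $\lotg$. This in turn follows from uniqueness for (\ref{eq:wave}) together with the assumed observability of the unperturbed system: if $\sum a_n z_n \psi_n \equiv 0$ with $(a_n) \in \ell^2$, the corresponding solution $w$ to (\ref{eq:wave}) has $\calb w \equiv 0$, and reabsorbing the memory term via the same compactness argument forces trivial initial data and hence $a_n \equiv 0$.

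The main obstacle I expect is the high-frequency decay estimate. A naive Duhamel bound loses a power of $\lambda_n$ and only becomes useful after the twofold integration by parts that trades the prefactor $\lambda_n^2$ for $\|M\|_{H^2}$; controlling the boundary terms produced in that step, closing the Volterra iteration uniformly in $n$, and matching the resulting summability of $\|r_n \psi_n\|^2$ against the asymptotic distribution of the eigenvalues in the ambient dimension form the technical heart of the argument.
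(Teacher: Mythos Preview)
Your overall architecture---treat $\{z_n\psi_n\}$ as a perturbation of the exponential system, show the tail is a Riesz sequence, and recover the finitely many remaining modes via independence---matches the paper's. But two of the ingredients you rely on are not correct as stated.

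\medskip
\textbf{Wrong reference exponential.} You compare $z_n$ with $e^{i\lambda_n t}$ and claim $\|r_n\|_{L^2(0,T)}=O(|\lambda_n|^{-1})$. This is false when $M(0)\neq 0$. After your two integrations by parts, the forcing for $r_n$ contains the boundary term $i\lambda_n M(0)e^{i\lambda_n t}$, which is $O(|\lambda_n|)$, not $O(1)$; it is not ``harmless.'' Fed through the Duhamel kernel $\lambda_n^{-1}\sin(\lambda_n(t-s))$, this term is resonant and produces the secular contribution $\tfrac{M(0)}{2}\,t\,e^{i\lambda_n t}$, so $\|z_n-e^{i\lambda_n t}\|_{L^2(0,T)}$ stays bounded away from zero. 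The paper avoids this by comparing $z_n$ with $e^{(\gamma+i\lambda_n)t}$, $\gamma=M(0)/2$, for which one does have $\|z_n-e^{(\gamma+i\lambda_n)t}\|_{L^2}\le C|\lambda_n|^{-1}$; the factor $e^{\gamma t}$ is then removed at the end since it is bounded above and below on $[0,T]$.

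\medskip
\textbf{Compactness from $\sup_n\|\psi_n\|_\calg<\infty$ is not enough.} Even with the correct reference, knowing only $\|r_n\psi_n\|_{\lotg}\le C|\lambda_n|^{-1}$ and nothing about the cross terms $\lip\psi_n,\psi_m\rip_\calg$ does not yield operator-norm smallness of the tail map $(a_n)_{|n|\ge N}\mapsto\sum_{|n|\ge N}a_n r_n\psi_n$; the Cauchy--Schwarz route needs $\sum_n|\lambda_n|^{-2}<\infty$, which already fails for the Dirichlet Laplacian in spatial dimension $\ge 2$ by Weyl's law. The paper circumvents this with a genuine almost-orthogonality estimate for the $\psi_n$ (its Lemma~\ref{psib}):
\[
\Bigl\|\sum a_n\psi_n\Bigr\|_\calg^2 \le C\Bigl(\ep^{-1}\sum|a_n|^2+\ep\sum|\lambda_n a_n|^2\Bigr),
\]
valid for every $\ep\in(0,T]$. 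Applying this pointwise in $t$ with $a_n$ replaced by $a_n r_n(t)$ and integrating converts the $|\lambda_n|^{-1}$ decay of $r_n$ into a uniform tail bound $C((\ep|\lambda_N|^2)^{-1}+\ep)\sum|a_n|^2$, which can be made $<1$ for $N$ large, exactly what the Paley--Wiener lemma needs. Your proposed route does not supply a substitute for this step.

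\medskip
Finally, the independence step is substantially harder than ``reabsorbing the memory term via compactness''; in the paper it requires bootstrapping $\{a_n\lambda_n^k\}\in\ell^2$ via derivative estimates on $z_n-e^{(\gamma+i\lambda_n)t}$, Volterra uniqueness, and a finite elimination argument on the distinct eigenvalues.
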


Our approach is similar to \cite{avdonin13,loreti12,pandolfi15,green2019boundary} in the sense that we will argue that $\{z_n\psi_n\}$ is in a certain sense ``close'' to $\{e^{i\lambda_n t} \psi_n\}$. We will employ a simple version of the classical Paley-Wiener theorem \cite[p. 38]{young01} on equivalent bases to compare the two sequences.

\begin{lemma}\label{perr}
Let $\{e_n\}$ be a Riesz sequence in a Hilbert space $\calh$ and $\{f_n\} \subseteq \calh$. If there exists $q \in (0,1)$ such that
	\begin{equation}\label{percond} \left\| \sum a_{n}(e_{n}-f_{n}) \right\| \le q \left\| \sum_{n} a_{n}e_{n} \right\| \end{equation}
for all finite sequences $\{a_{n}\}$, then $\{f_n\}$ is also a Riesz sequence.
\end{lemma}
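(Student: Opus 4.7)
The plan is to derive two-sided estimates of $\|\sum a_n f_n\|$ in terms of $\|\sum a_n e_n\|$ by a direct application of the triangle inequality, and then feed in the Riesz bounds for $\{e_n\}$.

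First I would fix an arbitrary finite sequence $\{a_n\} \subset \bbc$ and write $\sum a_n f_n = \sum a_n e_n - \sum a_n(e_n - f_n)$. From the triangle inequality together with the perturbation hypothesis (\ref{percond}), I obtain the double inequality
\[
(1-q)\left\| \sum a_n e_n \right\| \;\le\; \left\| \sum a_n f_n \right\| \;\le\; (1+q)\left\| \sum a_n e_n \right\|.
\]
Since $q \in (0,1)$, the lower constant $1-q$ is strictly positive, which is the crucial point that makes the Riesz-Fischer (lower) side of the bound survive the perturbation.

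Next I would invoke the Riesz sequence hypothesis on $\{e_n\}$: there exist $c,C>0$ such that $c \sum |a_n|^2 \le \|\sum a_n e_n\|^2 \le C \sum |a_n|^2$. Squaring the double inequality above and chaining with these bounds produces
\[
(1-q)^2 c \sum |a_n|^2 \;\le\; \left\| \sum a_n f_n \right\|^2 \;\le\; (1+q)^2 C \sum |a_n|^2,
\]
which is exactly (\ref{rfseq}) for $\{f_n\}$ with constants $(1-q)^2 c$ and $(1+q)^2 C$. Since the finite sequence $\{a_n\}$ was arbitrary, $\{f_n\}$ is a Riesz sequence, as claimed.

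There is no real obstacle here: the statement is the classical Paley-Wiener stability principle, and the argument is essentially a triangle inequality. The only conceptual content is that the hypothesis (\ref{percond}) is phrased on linear combinations (rather than term-by-term, as in e.g. a Bari-type $\ell^2$ perturbation), so that both the upper and lower Riesz bounds transfer simultaneously; the strict inequality $q<1$ is what prevents the lower bound from collapsing.
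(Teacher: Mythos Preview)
Your proof is correct and follows essentially the same approach as the paper: both use the triangle inequality on $\sum a_n f_n = \sum a_n e_n - \sum a_n(e_n-f_n)$ together with the hypothesis to obtain $(1-q)\|\sum a_n e_n\| \le \|\sum a_n f_n\| \le (1+q)\|\sum a_n e_n\|$, and then invoke the Riesz bounds for $\{e_n\}$. The paper's version is just terser, stating only the lower inequality explicitly and leaving the rest implicit.
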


\begin{proof}
By the triangle inequality,
	\[\left\|\sum a_nf_n\right\| \ge \left\|\sum a_ne_n\right\|-\left\|\sum a_n(e_n-f_n)\right\| \ge (1-q)\left\|\sum a_ne_n\right\|. \]
The upper inequality follows in the same way.
\end{proof}

The first step in proving Theorem \ref{viscoriesz} is the following proposition.

\begin{proposition}\label{prop:2}
Let $\{z_n\}$ and $\{\psi_n\}$ be defined by (\ref{psidef}) and (\ref{viscoode}). If $\{e^{i\lambda_n t} \psi_n\}$ is a Riesz sequence, then there exists $N>0$ such that $\{z_n\psi_n\}_{|n| \ge N}$ is a Riesz sequence.
\end{proposition}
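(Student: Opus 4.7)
The plan is to apply the Paley--Wiener perturbation lemma (Lemma~\ref{perr}) with $e_n=e^{i\lambda_n t}\psi_n$ (Riesz by hypothesis) and $f_n=z_n\psi_n$ in $\hkg$, restricting throughout to $|n|\ge N$ for some $N$ to be chosen. It then remains to verify the perturbation condition (\ref{percond}) with some $q<1$ on this tail.

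The core technical step is a decay estimate for the difference $r_n(t):=z_n(t)-e^{i\lambda_n t}$. Applying Duhamel's formula to (\ref{viscoode}) yields
\[
r_n(t)=-\lambda_n\int_0^t \sin(\lambda_n(t-s))\int_0^s M(s-\tau)z_n(\tau)\,d\tau\,ds,
\]
and I would then integrate by parts in both variables, using $M\in H^2(0,T)$ to transfer derivatives onto $M$, $M'$, $M''$. Each transfer against an oscillatory factor $e^{\pm i\lambda_n(\cdot)}$ should contribute a $1/|\lambda_n|$ gain; combined with the bound $|e^{i\lambda_n t}|\le e^{cT}$ on $[0,T]$ (from condition (A)) and a short Gronwall bootstrap to absorb the self-referential contribution $(M*r_n)$, this produces the target estimate $\|r_n\|_{L^2[0,T]}=O(1/|\lambda_n|)$.

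Granted this, the remainder is routine. Setting $a_m=\delta_{mn}$ in the Bessel half of the Riesz inequality for $\{e^{i\lambda_n t}\psi_n\}$, together with $\int_0^T|e^{i\lambda_n t}|^2\,dt\asymp 1$, yields $\|\psi_n\|_\calg\le C$ uniformly in $n$. Then triangle inequality plus Cauchy--Schwarz give
\[
\Bigl\|\sum_{|n|\ge N}a_n(e^{i\lambda_n t}-z_n)\psi_n\Bigr\|_{\hkg}\le C\Bigl(\sum_{|n|\ge N}|\lambda_n|^{-2}\Bigr)^{1/2}\Bigl(\sum|a_n|^2\Bigr)^{1/2},
\]
which, combined with the Riesz lower bound $\sqrt c\,(\sum|a_n|^2)^{1/2}\le\|\sum a_n e^{i\lambda_n t}\psi_n\|_{\hkg}$, is strictly less than $q\|\sum a_n e^{i\lambda_n t}\psi_n\|_{\hkg}$ once $N$ is large, verifying the Paley--Wiener hypothesis.

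The principal obstacle is the decay estimate for $r_n$. A naive integration by parts leaves a leading boundary contribution of the form $M(0)\,t\,e^{i\lambda_n t}/2$, which is only $O(1)$ in $L^\infty[0,T]$; this reflects a genuine frequency shift, as a WKB ansatz for (\ref{viscoode}) predicts $z_n(t)\approx e^{(M(0)/2+i\lambda_n)t}$. Obtaining true $1/|\lambda_n|$ decay therefore demands either a careful accounting of the exact cancellations between all leading boundary contributions, or equivalently first comparing $z_n$ against $e^{M(0)t/2}e^{i\lambda_n t}$ --- which, being the image of $\{e^{i\lambda_n t}\psi_n\}$ under the bounded invertible multiplication by $e^{M(0)t/2}$ on $\hkg$, is itself a Riesz sequence --- and then estimating the genuinely smaller residual by the integration-by-parts argument above.
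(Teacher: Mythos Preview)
Your overall strategy matches the paper's: compare $\{z_n\psi_n\}$ with a Riesz reference sequence and invoke the Paley--Wiener perturbation lemma (Lemma~\ref{perr}). You also correctly diagnose, in your final paragraph, that the right reference is the shifted system $\{e^{(\gamma+i\lambda_n)t}\psi_n\}$ with $\gamma=M(0)/2$; the paper does exactly this, quoting the estimate $\|z_n-e^{(\gamma+i\lambda_n)\cdot}\|_{L^2(0,T)}\le C_1/|\lambda_n|$ from \cite{loreti12,green2019boundary}.

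The genuine gap is in the step you call ``routine.'' From the uniform bound $\|\psi_n\|_\calg\le C$ alone, triangle inequality plus Cauchy--Schwarz give at best
\[
\Bigl\|\sum_{|n|\ge N}a_n\,r_n\,\psi_n\Bigr\|_{\hkg}\le C\Bigl(\sum_{|n|\ge N}|\lambda_n|^{-2}\Bigr)^{1/2}\Bigl(\sum|a_n|^2\Bigr)^{1/2},
\]
and making the prefactor small as $N\to\infty$ forces $\sum_n|\lambda_n|^{-2}<\infty$. Nothing in conditions (A)--(B) guarantees this, and it fails precisely in the setting the paper is written for: for the Dirichlet Laplacian on a bounded domain in $\bbr^d$, Weyl's law gives $\lambda_n\asymp n^{1/d}$, so the series diverges for every $d\ge 2$. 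Your argument, as written, recovers the one-dimensional moment-method proof but does not survive the passage to the abstract multi-dimensional framework that is the point of the paper.

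The paper's fix is to extract more than $\sup_n\|\psi_n\|_\calg<\infty$ from the Bessel hypothesis. Lemma~\ref{psib} shows that for every $\ep\in(0,T]$,
\[
\Bigl\|\sum a_n\psi_n\Bigr\|_\calg^2\le C\Bigl(\ep^{-1}\sum|a_n|^2+\ep\sum|\lambda_n a_n|^2\Bigr),
\]
an approximate-orthogonality statement for $\{\psi_n\}$ inherited from the space-time Bessel property of $\{e^{i\lambda_n t}\psi_n\}$. Applying this pointwise in $t$ with $a_n$ replaced by $a_n r_n(t)$, integrating, and inserting $\int_0^T|r_n|^2\le C_1/\lambda_n^2$ yields
\[
\int_0^T\Bigl\|\sum_{|n|\ge N}a_n\,r_n(t)\,\psi_n\Bigr\|_\calg^2\,dt\le CC_1\bigl((\ep\,|\lambda_N|^2)^{-1}+\ep\bigr)\sum|a_n|^2,
\]
with no summability of $\{\lambda_n^{-2}\}$ required. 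One then fixes $\ep$ small and takes $N$ large so the constant drops below the Paley--Wiener threshold. This lemma is the missing ingredient in your proposal.
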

We will use the following lemma which shows there is some orthogonality in the sequence $\{\psi_n\}$ as a consequence of (B). 
\begin{lemma}\label{psib}
Suppose $\{e^{i\lambda_nt}\psi_n\}$ is a Bessel sequence in $\lotg$. There exists $C>0$ such that for any $\ep \in (0,T]$ and any finite sequence $\{a_n\} \subset \bbc$,
	\begin{equation}\label{psib2} \left\| \sum a_n \psi_n \right\|^2_\calg \le C\left(\ep^{-1}\sum |a_n|^2 + \ep\sum |\lambda_na_n|^2 \right) \end{equation}
\end{lemma}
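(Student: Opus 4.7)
The plan is to view $\sum a_n \psi_n$ as the trace at $t = 0$ of the time-dependent function $g(t) := \sum a_n e^{i\lambda_n t}\psi_n$ and bound that trace by a scaled Sobolev-type estimate, using the $L^2$-in-time bounds on $g$ and $g'$ that follow from the Bessel hypothesis.

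Since $\mu_n \to \infty$, the index set $J_0$ is finite, so the contribution $\sum_{n \in J_0} a_n \psi_n$ is controlled directly by Cauchy--Schwarz in the finite sum, and because $\ep \le T$ this bound fits inside $C\ep^{-1}\sum|a_n|^2$. It therefore suffices to prove (\ref{psib2}) for the $J_1$-sum alone; restricting to those indices, each $e^{i\lambda_n t}$ is a genuine exponential and differentiation gives $g'(t) = i\sum_{n \in J_1}\lambda_n a_n e^{i\lambda_n t}\psi_n$, a finite linear combination of the same building blocks $e^{i\lambda_n t}\psi_n$ with new coefficients $\lambda_n a_n$.

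From the identity $g(0) = g(t) - \int_0^t g'(s)\,ds$, Cauchy--Schwarz applied to the integral followed by averaging the resulting inequality over $t \in [0,\ep]$ produces the scaled trace estimate
$$\|g(0)\|_\calg^2 \le 2\ep^{-1}\int_0^\ep \|g(t)\|_\calg^2\,dt + 2\ep\int_0^\ep \|g'(s)\|_\calg^2\,ds,$$
which is the source of the $\ep^{-1}/\ep$ balance in the statement. Enlarging each domain of integration to $[0,T]$ (the integrands being nonnegative) and invoking the Bessel hypothesis for $\{e^{i\lambda_n t}\psi_n\}$ with coefficient sequences $\{a_n\}$ and $\{\lambda_n a_n\}$ in turn bounds the two right-hand integrals by multiples of $\sum|a_n|^2$ and $\sum|\lambda_n a_n|^2$ respectively. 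Combining with the $J_0$ estimate yields (\ref{psib2}).

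The only subtlety is the separate treatment of $J_0$, which is forced because the term $\ep\sum|\lambda_n a_n|^2$ on the right-hand side is blind to those indices; beyond this small bookkeeping point the argument is a routine scaled trace computation and I do not anticipate a serious obstacle.
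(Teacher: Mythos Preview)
Your argument is correct and is essentially the same as the paper's: both write $\sum a_n\psi_n = g(0)$ for $g(t)=\sum a_n e^{i\lambda_n t}\psi_n$, use $g(0)=g(t)-\int_0^t g'$, apply Cauchy--Schwarz, average over $[0,\ep]$, and invoke the Bessel bound on $g$ and $g'$. The only minor difference is the handling of $J_0$: you dispose of it by finiteness and Cauchy--Schwarz, whereas the paper instead exploits the parity $\psi_{-n}=-\psi_n$ to rewrite $\sum_{J_0}a_n(e^{i\lambda_n t}-1)\psi_n$ as a linear combination of the $e^{i\lambda_n t}\psi_n$ themselves and apply Bessel once more; your route is shorter and equally valid.
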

\begin{proof}
Let $\{a_n\} \subset \bbc$ be a finite collection of scalars, $\ep \in (0,T]$.
\begin{align*}
	\ep\left\| \sum a_n \psi_n \right\|^2_\calg & = \int_0^\ep \left\|\sum a_ne^{i\lambda_nt}\psi_n-a_n(e^{i\lambda_nt}-1)\psi_n \right\|^2_\calg \, dt \\			
					&\le 2 \int_0^T \left\|\sum a_ne^{i\lambda_nt}\psi_n\right\|^2 \, dt + 2 \int_0^\ep \left\| \sum a_n(e^{i\lambda_nt}-1)\psi_n \right\|^2 \, dt 
	\end{align*}
The first term is bounded by $2C \sum |a_n|^2$ by the Bessel inequality. To estimate the second term, we split the sum over $J_0$ and $J_1$. Since $\int_0^t e^{i\lambda_ns}(i\lambda_n) \, ds = e^{i\lambda_nt}-1$,
	\begin{align*} \int_0^\ep \left\| \sum_{n \in J_1} a_n(e^{i\lambda_nt}-1)\psi_n \right\|^2 \, dt &= \int_0^\ep \left\| \int_0^t \sum_{J_1} e^{i\lambda_n s} (i\lambda_n) \, ds \, a_n \psi_n \right\|^2 \, dt \\
					&\le \int_0^\ep \left(\int_0^t \, ds \right)\left( \int_0^t \left\| \sum_{J_1} e^{i\lambda_n s} (i\lambda_n a_n) \psi_n \right\|^2 \, ds \right)\, dt \\
					&\le \dfrac{\ep^2}{2} \int_0^T \left\| \sum_{J_1} e^{i\lambda_n s} (i\lambda_n a_n) \psi_n \right\|^2 \, ds \\
					&\le \dfrac{C\ep^2}{2} \sum_{J_1} |a_n\lambda_n|^2.  \end{align*}
For $n\in J_0$, set $b_n=a_n+a_{-n}$. Then, since $b_n$ and $\sgn(n)it\psi_n$ are even functions of $n$ and $\psi_n$ is odd,
	\[ \dfrac 12 \sum_{n \in J_0}b_n(1+\sgn(n)it)\psi_n = \sum_{J_0 \cap \bbn} b_n \sgn(n)it \psi_n \]
	\[= \sum_{J_0} a_n(\sgn(n)it)\psi_n = \sum_{J_0} a_n(e^{i\lambda_nt}-1) \psi_n.\]
Therefore,
	\[ \int_0^\ep \left\| \sum_{n \in J_0} a_n(e^{i\lambda_nt}-1) \psi_n \right\|^2 \, dt \le \dfrac{C}{4} \sum_{J_0} |b_n|^2 \le C \sum |a_n|^2 .\]
\end{proof}

\begin{proof}[Proof of Proposition \ref{prop:2}]
As shown in \cite{loreti12,green2019boundary}, there exists $C_1>0$ such that
	\begin{equation}\label{zest} \int_0^T |z_n(t) - e^{(\gamma + i \lambda_n) t}|^2 \, dt \le \dfrac{C_1}{\lambda_n^2} \end{equation}
for $\gamma =M(0)/2$, $\lambda_n \ne 0$. When $\lambda_n=0$, $z_n(t) = 1+i\sgn(n)t=e^{i\lambda_nt}$, so the difference is zero.
Applying Lemma \ref{psib} and then the above estimate (\ref{zest}), we have for any $N \in \bbn$
	\begin{align}
		\notag\int_0^T &\left\| \sum_{|n| \ge N} a_n \psi_n \left(z_n(t)-e^{(\gamma+i\lambda_n )t}\right) \right\|^2 \, dt \\
			\notag&\le C \left( \int_0^T \ep^{-1}\sum_{|n| \ge N} |a_n(z_n-e^{(\gamma+i\lambda_n )t})|^2 + \ep\sum_{|n| \ge N} |\lambda_na_n(z_n-e^{(\gamma+i\lambda_n )t})|^2\right) \, dt \\
			\label{eq:pwest}&\le C C_1 \left((\ep |\lambda_N|^2)^{-1} \sum |a_n|^2 + \ep \sum |a_n|^2 \right).
	\end{align}
Now, since $\{e^{i\lambda_n t} \psi_n\}$ forms a Riesz sequence,
	\begin{align*} \int_0^T \left\|\sum a_ne^{(\gamma+i\lambda_n)t} \psi_n \right\|^2 \,dt &\ge \min\{1,e^{-\re \gamma}\} \int_0^T \left\|\sum a_n e^{i\lambda_nt}\psi_n \right\|^2 \, dt \\
		&\ge c\min\{1,e^{-\re \gamma}\} \sum |a_n|^2. \end{align*}
Setting $c_\gamma = c\min\{1,e^{-\re \gamma}\}$, (\ref{eq:pwest}) implies
	\begin{multline*}\int_0^T \left\| \sum_{|n| \ge N} a_n \psi_n \left(z_n(t)-e^{(\gamma+i\lambda_n )t}\right) \right\|^2 \, dt  \\ 
	\le CC_1 c_\gamma^{-1} ((\ep|\lambda_N|^2)^{-1}+\ep) \left\| \sum a_n e^{(\gamma+i\lambda_n )t} \psi_n \right\|^2. \end{multline*}
Now, take $\ep$ small (so that $CC_1c_\gamma^{-1}\ep = \tfrac 14$). Then, since $|\lambda_n| \to \infty$ pick $N$ large enough that $\tfrac 14 |\lambda_N| \ge CC_1c_\gamma^{-1}\ep^{-1}$. Therefore, by Lemma \ref{perr} with $q=\tfrac 12$, we conclude that $\{z_n\psi_n\}_{|n| \ge N}$ forms a Riesz sequence in $\lotg$.
\end{proof}

The second step in proving Theorem \ref{viscoriesz} is to establish the $\ell^2$-independence of $\{z_n\psi_n\}$.
\begin{proposition}\label{ltwoind}
If $\{e^{i\lambda_nt}\psi_n\}$ is a Riesz sequence in $\lotg$, then the sequence $\{z_n\psi_n\}$ defined by (\ref{psidef}) and (\ref{viscoode}) is $\ell^2$-independent in $\lotg$, i.e. for any $\{a_n\} \in \ell^2$ s.t. $\sum a_nz_n\psi_n=0$, $a_n=0$ for all $n$.
\end{proposition}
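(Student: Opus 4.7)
My plan is to combine Proposition \ref{prop:2} with a direct analysis of the finite low-frequency block. Suppose $\sum_{n\in\zo} a_n z_n\psi_n = 0$ in $\hkg$ for some $\{a_n\}\in\ell^2$; the goal is to conclude $a_n = 0$ for all $n$. By Proposition \ref{prop:2}, $\{z_n\psi_n\}_{|n|\ge N}$ is a Riesz sequence, hence $\ell^2$-independent, for some $N$. Writing a candidate kernel element as $a = a_< + a_\ge$ with supports in $\{|n|<N\}$ and $\{|n|\ge N\}$ respectively, the identity $\sum a_n z_n \psi_n = 0$ forces $\sum_{|n|<N} a_n z_n\psi_n \in V \cap W$, where $V := \mathrm{span}\{z_n\psi_n:|n|<N\}$ and $W := \overline{\mathrm{span}}\{z_n\psi_n:|n|\ge N\}$. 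The proof thus reduces to two sub-claims: (a) $\{z_n\psi_n\}_{|n|<N}$ is linearly independent in $\hkg$, and (b) $V\cap W = \{0\}$. Note that a linear-independence counterexample of the form $f_0 = e_1 + \varepsilon e_2$ over an orthonormal basis $\{e_n\}$ shows that (a) alone is \emph{not} enough; (b) is a genuine additional requirement.

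For (a), since $\sum_{|n|<N} c_n z_n(t)\psi_n$ is a finite sum of continuous $\calg$-valued functions, its $L^2$-vanishing implies pointwise vanishing on $[0,T]$. Repeated differentiation at $t = 0$, using the ODE (\ref{viscoode}) to compute $z_n^{(k)}(0)$ as a polynomial in $\lambda_n$ with leading term $(i\lambda_n)^k$ for $n\in J_1$ (and $z_n^{(k)}(0)=0$ for $k\ge 2$ when $n\in J_0$), yields $\sum_{|n|<N,\,n\in J_1} c_n\lambda_n^k\psi_n = 0$ for $k\ge 2$ together with two scalar constraints from orders $0$ and $1$. A Vandermonde argument grouped by the distinct values of $\lambda_n$ reduces matters to $\sum_{n:\lambda_n=\lambda^*} c_n\psi_n = 0$ for each $\lambda^*$; condition (B) then forces $c_n=0$, since such a dependence would yield a wave-equation solution $w(t) = e^{i\lambda^* t}\sum c_n\phi_{|n|}$ (or $u+tv$ with $\cala u = \cala v = 0$ when $\lambda^* = 0$) whose boundary trace $\calb w$ vanishes identically, contradicting (\ref{eq:ob}) unless the corresponding initial data vanish. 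If the regularity $M\in H^2$ does not supply enough derivatives for larger $N$, the same conclusion can be obtained by working with the entire finite Laplace transforms $\tilde z_n(s) = \int_0^T e^{-st}z_n(t)\,dt$ and matching asymptotic expansions as $s\to\infty$.

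Step (b) is the main obstacle. I would use the reference sequence $\{e^{(\gamma+i\lambda_n)t}\psi_n\}$ (a Riesz sequence by condition (B), since multiplication by $e^{\gamma t}$ is an isomorphism of $\hkg$) and its biorthogonal dual $\{\tilde\theta_m\}$. Pairing $\sum a_n z_n\psi_n = 0$ with each $\tilde\theta_m$ and substituting $z_n = e^{(\gamma+i\lambda_n)t} + r_n$ from (\ref{zest}) converts the problem to $(I+K)a = 0$ on $\ell^2$, where $K_{mn} = \lip r_n\psi_n,\tilde\theta_m\rip$. Lemma \ref{psib} with the estimate $\int_0^T |r_n|^2\,dt \le C_1/\lambda_n^2$ shows that the high-frequency tail of $K$ has arbitrarily small norm, so $K$ is a norm-limit of finite-rank operators, hence compact; consequently $I+K$ is Fredholm of index zero with finite-dimensional kernel. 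The heart of the argument is then to show $\ker(I+K)=\{0\}$---equivalently $V\cap W=\{0\}$---which I expect to follow by combining the Fredholm structure with the linear independence from (a) and a perturbation argument transferring the triviality of the corresponding intersection $V_0 \cap W_0 = \{0\}$ for the reference Riesz sequence (trivial since the whole reference sequence is $\ell^2$-independent) to $V\cap W$ through the controlled perturbation $z_n - e^{(\gamma+i\lambda_n)t} = r_n$.
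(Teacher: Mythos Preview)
Your reduction to the two sub-claims (a) and (b) is accurate, and your counterexample correctly identifies (b) as the real content. The gap is that step (b) is never actually carried out. Showing $K$ compact and $I+K$ Fredholm of index zero only tells you $\ker(I+K)$ is finite-dimensional; it does not show the kernel is trivial, and that triviality is precisely the proposition. The ``perturbation argument transferring $V_0\cap W_0=\{0\}$ to $V\cap W=\{0\}$'' that you allude to does not exist in any standard form here: the remainder $r_n = z_n - e^{(\gamma+i\lambda_n)t}$ is small only for $|n|$ large, while on the finite low-frequency block it is an uncontrolled $O(1)$ perturbation. Nothing in your Fredholm setup distinguishes the actual situation from your own counterexample $f_1 = \sum_{n\ge 2} c_n e_n$.

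The paper's proof supplies the missing idea, and it is structural rather than perturbative: from $\sum a_n z_n\psi_n = 0$ one first upgrades the decay of $\{a_n\}$ (showing $\{a_n\lambda_n\},\{a_n\lambda_n^2\}\in\ell^2$ via Bessel estimates on $e_n', e_n''$), which justifies differentiating the identity twice termwise. Using the ODE \eqref{viscoode} and uniqueness for the Volterra equation then yields a \emph{second} vanishing relation $\sum a_n\lambda_n^2 z_n\psi_n = 0$. Subtracting $\lambda_1^2$ times the first relation kills the $n=\pm 1$ terms while keeping the higher coefficients nonzero; iterating over the finitely many distinct eigenvalues below $N$ produces a relation $\sum_{|n|\ge N} b_n z_n\psi_n = 0$ with $b_n$ a nonzero multiple of $a_n$, and now the Riesz property of the tail forces $a_n=0$ for $|n|\ge N$. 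The remaining finite relation is then handled by the same differentiate-and-Volterra trick applied to scalar combinations of the $z_n$. In short, the ODE structure gives you the map $\{a_n\}\mapsto\{\lambda_n^2 a_n\}$ on the kernel, and that is what lets you peel off the low frequencies and reach the Riesz tail---this is the ingredient your outline is missing.
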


Together, Propositions \ref{prop:2} and \ref{ltwoind}, establish that $\{z_n\psi_n\}$ is a Riesz sequence (Theorem \ref{viscoriesz}) by virtue of the following lemma from the Appendix of \cite{green2019boundary}.

\begin{lemma}\label{ltwo}
Let $\{f_n\}_{n=1}^\infty$ be a sequence in a Hilbert space $\calh$. If $\{f_n\}_{n \ge N}$ is a Riesz sequence for some $N \in \bbn$ and $\{f_n\}_{n=1}^\infty$ is $\ell^2$-independent, then $\{f_n\}_{n=1}^\infty$ is a Riesz sequence.
\end{lemma}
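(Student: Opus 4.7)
The plan is to upgrade the lower Riesz--Fischer bound from the tail $\{f_n\}_{n\ge N}$ to the full sequence; the upper Bessel bound transfers almost for free. Indeed, by the triangle inequality,
	\[ \left\| \sum_{n=1}^\infty a_n f_n \right\| \le \sum_{n=1}^{N-1} |a_n|\,\|f_n\| + \left\| \sum_{n \ge N} a_n f_n \right\|, \]
and Cauchy--Schwarz applied to the finite part together with the tail's Bessel bound yields $\|\sum a_n f_n\|^2 \le C'\sum |a_n|^2$. So the substantive work is the lower inequality.

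Set $E := \mathrm{span}\{f_1,\dots,f_{N-1}\}$ and $F := \overline{\mathrm{span}}\{f_n : n \ge N\}$. First I would use the $\ell^2$-independence hypothesis to show $E \cap F = \{0\}$: any $v \in E \cap F$ admits a finite representation $v = \sum_{n=1}^{N-1} a_n f_n$ and an $\ell^2$-representation $v = \sum_{n\ge N} b_n f_n$, the latter being the unique expansion in the Riesz basis $\{f_n\}_{n\ge N}$ of its closed span. Subtracting gives an $\ell^2$-combination of $\{f_n\}_{n\ge 1}$ summing to zero, and $\ell^2$-independence forces all $a_n$ and $b_n$ to vanish, hence $v=0$. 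As a byproduct, $f_1,\dots,f_{N-1}$ are themselves linearly independent.

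Since $E$ is finite-dimensional and $F$ is closed, a standard Hilbert-space fact gives that $E + F$ is closed in $\calh$, and combined with $E \cap F = \{0\}$ this realizes $E \oplus F$ as a topological direct sum. By the closed graph theorem, the oblique projections $P_E : E \oplus F \to E$ and $P_F : E \oplus F \to F$ are bounded. So for any $\{a_n\} \in \ell^2$ the series $v = \sum_{n=1}^\infty a_n f_n$ converges in norm (using the Bessel bound already established), and its decomposition $v = e + f$ with $e = \sum_{n=1}^{N-1} a_n f_n \in E$ and $f = \sum_{n\ge N} a_n f_n \in F$ satisfies $\|e\| \le \|P_E\|\|v\|$ and $\|f\|\le \|P_F\|\|v\|$.

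Applying the Riesz lower bound on the tail gives $\sum_{n\ge N} |a_n|^2 \le c^{-1}\|f\|^2 \le c^{-1}\|P_F\|^2 \|v\|^2$, while the equivalence of any two norms on the finite-dimensional $E$ (using that $f_1,\dots,f_{N-1}$ are linearly independent) gives $\sum_{n=1}^{N-1} |a_n|^2 \le C_E \|e\|^2 \le C_E\|P_E\|^2 \|v\|^2$. Adding these yields the desired lower Riesz bound. The only delicate point is the closedness of $E+F$ and the resulting continuity of the oblique projections; everything hinges on the finite-dimensionality of $E$, so once that structural fact is invoked, the rest is bookkeeping.
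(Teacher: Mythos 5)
Your proof is correct and complete. One thing worth noting: the paper does not actually prove this lemma in-document — it imports it from the appendix of \cite{green2019boundary} — so the only possible comparison is with that external argument. Your route is the clean ``all at once'' version: (i) $\ell^2$-independence forces $E\cap F=\{0\}$, which is legitimate because the tail $\{f_n\}_{n\ge N}$, being a Riesz sequence, is a Riesz basis for $F=\overline{\mathrm{span}}\{f_n : n\ge N\}$, so any $v\in E\cap F$ really does produce a vanishing $\ell^2$-combination of the full family; (ii) finite-dimensional plus closed is closed, so $E+F$ is a Banach space and the closed graph theorem makes the oblique projections $P_E$, $P_F$ bounded; (iii) the tail's Riesz lower bound together with norm equivalence on the finite-dimensional $E$ (using linear independence of $f_1,\dots,f_{N-1}$, itself immediate from $\ell^2$-independence) recovers $\sum|a_n|^2$ from $\|v\|^2$. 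The Bessel bound transfer is indeed trivial. A common variant of the same idea — and plausibly what the cited appendix does — is to adjoin $f_{N-1},f_{N-2},\dots,f_1$ one at a time: $\ell^2$-independence guarantees $d_k:=\mathrm{dist}\bigl(f_k,\overline{\mathrm{span}}\{f_n\}_{n>k}\bigr)>0$, and this positive distance plays exactly the role of your bounded projections, with an induction on the finitely many adjoined vectors replacing the single direct-sum decomposition. The two arguments are essentially equivalent in content; yours has the advantage of exhibiting the geometric mechanism (a positive gap between $E$ and $F$) in one step, at the cost of invoking slightly heavier machinery (closedness of $E+F$ and the closed graph theorem) where the inductive route needs only the distance estimate.
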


It only remains to prove Proposition \ref{ltwoind}.

\begin{proof}[Proof of Proposition \ref{ltwoind}]
Define $e_n(t) = z_n(t)-e^{(\gamma+i\lambda_n)t}$. Since $\sum a_nz_n\psi_n=0$, $\sum a_ne_n(t)\psi_n = -\sum a_ne^{(\gamma+i\lambda_n)t}\psi_n$. As shown in the proof of Proposition 3.6 in \cite{green2019boundary}, for $n \in J_2 :=\{ n \in \bbz_0$ : $\lambda_{|n|}>0\}$,
	\[ e_n'(t) = f_n(t)+O(\lambda_n^{-1}) \]
where $\{f_n\psi_n\}$ forms a Bessel sequence (recall Definition \ref{def:riesz}). Notice that by Lemma \ref{psib}, when the $O(\lambda_n^{-1})$ term is multiplied by $\psi_n$, it will also form a Bessel sequence. Therefore, $\{e_n'\psi_n\}_{n \in J_2}$ forms a Bessel sequence and since $\{a_n\} \in \ell^2$, $\sum_{n \in J_2} a_ne_n'\psi_n$ converges. This can be extended to all $\bbz_0$ since $\bbz_0 \backslash J_2$ is finite so 
	\[ \dfrac{d}{dt} \sum_{n \in \bbz_0} a_ne^{(\gamma+i\lambda_n)t}\psi_n = \sum_{\bbz_0} a_ne_n'\psi_n \in \lotg . \]
By Lemma 3.3 in \cite{pandolfi-book}, this implies that $\{a_n\lambda_n\} \in \ell^2$. Repeating this process for $e_n''$, we get that $\{e_n''\lambda_n^{-1}\psi_n\}_{n \in J_2}$ is a Bessel sequence so $\{a_n\lambda_n^2\} \in\ell^2$. Therefore,
	\[ \sum a_nz_n''\psi_n = \sum a_n(\gamma+i\lambda_n)^2e^{(\gamma+i\lambda_n)\cdot}\psi_n + \sum a_n\lambda_n\dfrac{e_n''}{\lambda_n}\psi_n \in \lotg. \]
This allows us to exchange the derivative and the sum yielding
	\[ 0 = \dfrac{d^2}{dt^2} \sum a_nz_n(t)\psi_n = -\sum a_n\lambda_n^2z_n(t)\psi_n - \int_0^t M(t-s) \sum a_n\lambda_n^2z_n(s) \psi_n \, ds. \]
By standard theory of Volterra integral equations, we have $\sum a_n\lambda_n^2z_n \psi_n = 0$.

Now, set $\Lambda = \{ n \in \bbz : |\lambda_{|n|}| < |\lambda_{|n|+1}| \}$, i.e. the indices corresponding to distinct eigenvalues of $\cala$. For each $n \in \Lambda$, define
	\[\Psi_n = \sum_{\lambda_m=\lambda_n} a_m\psi_m. \]
Then, $0 = \sum_{\bbz_0} a_n\lambda_n^2z_n\psi_n= \sum_\Lambda \lambda_n^2 \Psi_n z_n$. Next,
set $\Psi_n^{(1)} = (\lambda_{1}^2-\lambda_n^2)\Psi_n$. Then, notice that $\Psi_n^{(1)}$ has the following properties:
\begin{itemize}
	\item[(a)] $\displaystyle \sum \Psi_n^{(1)} z_n = \lambda_1^2 \sum \Psi_n z_n(t) - \sum \lambda_n^2\Psi_n z_n(t) = 0 $.
	\item[(b)] $\Psi_1^{(1)} = \Psi_{-1}^{(1)}=0$ but for $|n| > 1,$ $\Psi_{n}^{(1)} = 0 \iff \Psi_n=0$.
\end{itemize}
This can be repeated for $m \in \Lambda$, $2 \le m < N$ by setting $\Psi_n^{(m)} = (\lambda_m^2-\lambda_n^2)\Psi_{n}^{(m-1)}$ (Here $m-1$ means the index in $\Lambda$ immediately preceding $m$). Thus, we have constructed
	\[ \sum_{|n| \ge N}b_n z_n \psi_n= \sum_{\{|n|\ge N\} \cap \Lambda }\Psi_{n}^{(N-1)}z_n = 0 \hspace{5ex} \mbox{with } b_n = a_n\prod_{\substack{1\le k < N, \\ k \in \Lambda}}(\lambda_k^2-\lambda_n^2).\]
But the subsequence $\{z_n\psi_n\}_{|n|\ge N}$ is a Riesz sequence by (ii) so $b_n=0$ which implies $a_n=0$ for $|n|\ge N$. Now we only need to deal with the finite sum
	\begin{equation}\label{linind} \sum_{\{|n| \le N\} \cap \Lambda} \Psi_n z_n= 0 .\end{equation}
We will prove that $\{z_n\}_{|n| \le N \cap \Lambda}$ is linearly independent. If it is not, then there is a smallest linearly dependent subset, indexed by $\{n_k\}_{k=1}^M$, $M \ge 2$, and suitable $\{c_{k}\}$ (non-zero) such that
	\begin{equation}\label{lindep} \sum_{k=1}^M c_{k}z_{n_k}(t)=0.\end{equation}
First, notice that it cannot be the case that both $M=2$ and (\ref{lindep}) is of the form $c_1z_{n_M}(t)+c_2z_{-n_M}(t)=0$. Indeed, $z_n(0)=z_{-n}(0)$ but $z_n'(0)=-z_{-n}'(0)$ (see (\ref{viscoode})) so $c_1=c_2=0$.
Now, we again differentiate twice in time and apply the uniqueness property of the Volterra equation to obtain

$\sum_{k=1}^M \lambda_{n_k}^2c_{n_k}z_{n_k}(t)=0$.
Therefore we have found a smaller linearly dependent collection, namely
	\[\sum_{k=1}^{M} (\lambda_{n_M}^2-\lambda_{n_k}^2)c_{n_k}z_{n_k}(t)=0\]
where one or at most two of the new coefficients are zero (two only if $\lambda_{n_M}$ and $\lambda_{-n_M}$ are in the collection, but then $M>2$). Thus $\{z_n\}$ is linearly independent for distinct $\lambda_n$ and from (\ref{linind}), we conclude that each $\Psi_n=0$. Finally, since $\{e^{i\lambda_nt}\psi_n\}$ forms a Riesz sequence,
	\[ T \max\{1,e^{\Im(\lambda_n)}\} \|\Psi_n\|^2 \ge 
	\int_0^T \left\| \sum_{\lambda_m=\lambda_n} a_me^{i\lambda_mt}\psi_m \right\|^2 \, dt \ge c \sum_{\lambda_n=\lambda_m} |a_m|^2 \]
so each $a_n=0$.
\end{proof}

\section{Inverse Problem}\label{sec:inverse}

\subsection{Stability Estimate}
First we give the relationship between the systems (\ref{eq:wave}) and (\ref{eq:source}).
\begin{lemma}\label{lemma:2} 
Let $w$ satisfy (\ref{eq:wave}) with $w_0=0$, $w_1=f \in \calh$. Then
	\begin{equation}\label{eq:10} u(t) = \int_0^t \sigma (t-s)w(s) \, ds \end{equation}
satisfies (\ref{eq:source}).
\end{lemma}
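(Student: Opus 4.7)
The plan is to verify directly that the convolution $u(t) = (\sigma * w)(t)$ satisfies the source equation, reducing everything to a computation that exploits the initial conditions $w(0)=0$, $w'(0)=f$, Leibniz differentiation under the integral, and the commutativity of convolution (which handles the memory term).

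First I would compute $u(0)$ and $u'(0)$ to verify the zero initial conditions. From the representation $u(t) = \int_0^t \sigma(t-s)w(s)\,ds$, clearly $u(0)=0$; to differentiate, I would rewrite $u(t) = \int_0^t \sigma(\tau)w(t-\tau)\,d\tau$ and use $w(0)=0$ to eliminate the boundary term, producing
\[ u'(t) = \int_0^t \sigma(t-s)w'(s)\,ds, \]
which vanishes at $t=0$. A second differentiation and integration by parts (here is where the initial velocity $w'(0)=f$ gets extracted, producing the source $\sigma(t)f$) yield
\[ \int_0^t \sigma(t-s)w''(s)\,ds = u''(t) - \sigma(t)f. \]

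Next I would use the equation satisfied by $w$ to substitute $w''(s) = -\mathcal{A}w(s) + \int_0^s M(s-r)\mathcal{A}w(r)\,dr$ into the integral $\int_0^t \sigma(t-s)w''(s)\,ds$. After rearranging, this gives
\[ u''(t) + \mathcal{A}u(t) = \sigma(t)f + \int_0^t \sigma(t-s)\int_0^s M(s-r)\mathcal{A}w(r)\,dr\,ds, \]
where I have pulled $\mathcal{A}$ through the convolution (justified by closedness of $\mathcal{A}$ together with sufficient regularity of $w$), so that $\mathcal{A}u(t) = \int_0^t \sigma(t-s)\mathcal{A}w(s)\,ds$.

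The final step, which is the only substantive one, is to identify the double integral on the right with $\int_0^t M(t-s)\mathcal{A}u(s)\,ds$. Applying Fubini to swap the order of integration in each expression reduces both to the convolution $(\sigma * M)(t-r)$ (respectively $(M*\sigma)(t-r)$) against $\mathcal{A}w(r)$, and commutativity of scalar convolution closes the identity. The main (mild) technical obstacle is justifying the exchange $\mathcal{A}\int_0^t \sigma(t-s)w(s)\,ds = \int_0^t \sigma(t-s)\mathcal{A}w(s)\,ds$ and the double-differentiation under the integral sign; both follow from the closedness of $\mathcal{A}$ and the regularity afforded by $\sigma \in C^1[0,T]$ together with the mild solution theory for \eqref{eq:wave}, which places $w \in C([0,T];\mathcal{H}^1) \cap C^1([0,T];\mathcal{H})$.
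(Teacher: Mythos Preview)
Your proposal is correct and follows essentially the same route as the paper: verify the zero initial data via Leibniz differentiation and $w(0)=0$, differentiate twice to extract the source term $\sigma(t)f$ from $w'(0)=f$, and then reduce the memory term to the commutativity of scalar convolution, checked by Fubini. The only cosmetic difference is that the paper computes $\mathcal{A}u + M*\mathcal{A}u$ directly and matches it to $\sigma*(w'')$, whereas you substitute $w''$ from \eqref{eq:wave} first; your added remarks on closedness of $\mathcal{A}$ and regularity of $w$ are a welcome clarification the paper leaves implicit.
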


\begin{proof}
First notice that for any $v \in C^1(0,T)$, integrating by parts, we have
	\begin{align} \label{eq:11a}\dfrac{d}{dt} \int_0^t &\sigma(t-s) v(s) \, ds = \int_0^t \sigma'(t-s) v(s) \, ds + \sigma(0) v(t) \\
	&=-\sigma(t-s)v(s)\Bigr|_{s=0}^{s=t} +\int_0^t \sigma(t-s) v'(s) \, ds + \sigma(0) v(t) \notag \\
	&= \sigma(t)v(0) + \int_0^t \sigma(t-s) v'(s) \, ds \label{eq:11b}\end{align}
Applying this to (\ref{eq:10}), $u$ satisfies the homogeneous initial conditions for (\ref{eq:source}) since $w(0)=w_0=0$.
Differentiating (\ref{eq:10}) with respect to $t$ and applying (\ref{eq:11b}) twice,
	\begin{align*}u''(t) &= \sigma'(t)w(0) + \sigma(t)w'(0) + \int_0^t\sigma(t-s)w''(s) \, ds \\
			&= \sigma(t)f + \int_0^t\sigma(t-s)w''(s) \, ds \end{align*}
where we have used the fact that $w(0) = w_0 = 0$ and $w'(0) = w_1=f$. Next, we claim that
	\begin{align*}\cala u(t) &+ \int_0^t M(t-s) \cala u(s) \, ds = \int_0^t \sigma(t-s) \cala w(s) \, ds \\
				&\hspace{6ex} + \int_0^t\int_0^s M(t-s)\sigma(s-r)\cala w(r) \, dr \, ds \\
			&= \int_0^t\sigma(t-s) \left(\cala w(s) + \int_0^s M(s-r) \cala w(r) \, dr\right) \, ds. 
	\end{align*}
If this holds, then the lemma is proved. We only need to confirm the last step, establishing that the convolutions commute. Indeed,
	\[ \int_0^t\int_0^s M(t-s)\sigma(s-r) v(r) \, dr \, ds = \int_0^t \int_r^tM(t-s)\sigma(s-r) \, ds \, v(r) \, dr \]
	\[= \int_0^t \int_r^{t}M(\tau -r)\sigma(t-\tau) \, d\tau \, v(r) \, dr = \int_0^t \int_0^{\tau}M(\tau -r)\sigma(t-\tau) v(r) \, dr\, d\tau\]
	\[ = \int_0^t \sigma(t-\tau)\int_0^{\tau}M(\tau -r) v(r) \, dr\, d\tau \]
for any $v \in C(0,T)$.
\end{proof}

The stability estimate (Theorem \ref{thm:stability}) is a simple consequence of this lemma.

\begin{proof}[Proof of Theorem \ref{thm:stability}]
As a consequence of Theorem \ref{thm:1}, with $w_0=0$, and $w_1=f$,
	\begin{equation}\label{eq:4} \|f\|_{\calh} \asymp \|\calb w\|_\hkg. \end{equation}
Then, in light of Lemma \ref{lemma:2},
	\begin{equation}\label{eq:12a} u'(x,t) = \sigma(0)w(t) + \int_0^t \sigma'(t-s)w(s) \, ds .\end{equation}
We first prove the lower inequality in (\ref{eq:stability}). By standard theory of Volterra equations \cite{vladimirov76}, there exists $K \in C[0,T]$ (which we will henceforth call the resolvent kernel of $\sigma'/\sigma(0)$) such that
	\begin{equation}\label{eq:12b} \sigma(0)w(t) = u'(t) + \int_0^t K(t-s) u'(s) \, ds. \end{equation}
Note that for any $\rho \in C[0,1],v \in L^2(0,T)$,
	\begin{align*}
		\int_0^T\left|\int_0^t \rho(t-s) v(s) \, ds \right|^2 \, dt & \le \int_0^T\int_0^t |\rho(t-r)|^2 \, dr \int_0^t\left|v(s)\right|^2 \, ds  \, dt \\
			& \le \frac{T^2 \|\rho\|_\infty^2}{2} \int_0^T\left|v(s)\right|^2 \, ds.
	\end{align*}
Applying this to (\ref{eq:12a}) and (\ref{eq:12b}), we obtain
	\begin{equation}\label{eq:12c} \|\calb w\|_\hkg \asymp \|\calb u\|_\hkpg \end{equation}
Applying (\ref{eq:4}) proves the theorem. 
\end{proof}

\subsection{Source Reconstruction}
We will give a formula for the Fourier coefficients of $f$, so we recall the following systems to decompose solutions to (\ref{eq:wave}) and (\ref{eq:source}): $\{\phi_n\}$, $\{\lambda_n\}$, and $\{z_n\}$ from Section \ref{harm}. Also, by Proposition \ref{prop1}, Theorem \ref{thm:1} is equivalent to the fact that $\{z_n\psi_n\}$ forms a Riesz sequence (recall Definition \ref{def:riesz}).

The main property we will use of Riesz sequences is the existence of a biorthogonal Riesz sequence. Two sequences $\{f_n\},\{g_k\}$ are \textit{biorthogonal} to each other if $\lip f_n,g_k \rip = \delta_{n,k}$, where $\delta_{n,k}$ is the Kronecker delta.

\begin{proof}[Proof of Theorem \ref{thm:formula}]
First, since $\{z_n\psi_n\}_{n \in \bbz_0}$ is Riesz sequence in $\lotg$, setting $w_n = \tfrac{z_n-z_{-n}}{2i}$ for $n \in \bbn$, $\{w_n\psi_n\}_{n \in \bbn}$ is still a Riesz sequence. Indeed, for a finite sequence $\{a_n\}_{n \in \bbn} \subset \bbc$,
	\begin{align}
		\notag 2\sum_{n=1}^\infty |a_n|^2 & = \sum_{n \in \bbz_0} |a_{|n|}|^2 \\
			\notag&\asymp \left\| \sum_{n \in \bbz_0} a_{|n|}z_n\psi_n\right\|^2 \\
			\notag&= \left\| \sum_{n=1}^\infty a_{n}z_n\psi_n + \sum_{n=1}^\infty a_{n}z_{-n} \psi_{-n} \right\|^2 \\
			\notag&= \left\| \sum_{n=1}^\infty a_{n}z_n\psi_n - \sum_{n=1}^\infty a_{n}z_{-n} \psi_{n} \right\|^2 \\
			\label{wrs}&= \left\| \sum_{n=1}^\infty 2ia_{n}w_n\psi_n \right\|^2 
	\end{align}
By the formula for $z_n$ (\ref{viscoode}), for $n \in J_1$, $w_n$ satisfies
	\begin{equation} \left\{ \begin{array}{lr}
		w_n''(t) + \lambda_n^2 w_n(t) = -\lambda_n^2 \displaystyle\int_0^t M(t-s) w_n(s) \, ds & \hspace{4ex}t \in [0,T]\\[3mm]
		w_{n}(0) = 0 \quad \quad w_{n}'(0) = \lambda_n \\[2mm]
		\end{array} \right. \label{eq:w-ode}
		\end{equation}
and $w_n(t)=t$ for $n \in J_0$. Since $\{w_n\psi_n\}$ is a Riesz sequence, there exists a biorthogonal Riesz sequence \cite{young01}, which we can compute in the following way. Define $\W:\calh \to \hkg$ by
	\begin{equation}\label{def:t} \W\phi_n := w_n\psi_n\end{equation}
for each $n \in \bbn$ and extend by linearity. $\W$ is bounded and has a bounded inverse using (\ref{wrs}) and the fact that $\{\phi_n\}$ is an orthonormal basis. For each $k \in \bbn$, set $p_k = (\W^{-1})^*\phi_k$. $\{p_k\}$ is biorthogonal to $\{w_n\psi_n\}$ since
	\[ \lip w_n\psi_n,p_k \rip = \lip w_n\psi_n, (\W^{-1})^*\phi_k \rip = \lip \W^{-1}w_n\psi_n,\phi_k \rip = \lip \phi_n,\phi_k \rip = \delta_{n,k}. \]
Next we compute the adjoint of the Volterra operator on $\hkg$, $V_\rho v(t) = \int_0^t\rho(t-s)v(s) \, ds$ for any $\rho \in L^2(0,T)$.
	\[ \int_0^T \int_0^t \rho(t-s)v(s)\, ds z(t) \, dt = \int_0^T \int_s^{T} \rho(t-s) v(s) z(t) \, dt \, ds \]
	\[= \int_0^T v(t) \int_t^T \rho(s-t) z(s) \, ds \, dt \]
So $V_{\rho}^* z(t) = \int_t^T \rho(s-t) \, z(s) \, ds$. We want to find $\theta_k$ such that
	\begin{equation}\label{eq:ptheta3} p_k = (\sigma(0)+V_{\sigma'}^*)\theta_k. \end{equation}
Recalling $K$ from (\ref{eq:12a}) and (\ref{eq:12b}), we see that $(I+V_{K})(\sigma(0)+V_{\sigma'})=\sigma(0)I$ so if we set $\theta_k = \sigma(0)^{-1}(I+V_K^*)p_k$, then (\ref{eq:ptheta3}) is satisfied. Indeed, 
	\[ (\sigma_0+V_{\sigma'}^*)\theta_k = \sigma(0)^{-1}[(I+V_K)(\sigma(0)+V_{\sigma'})]^*p_k = p_k,\]
thus establishing (\ref{eq:ptheta3}).
This gives the reconstruction formula. Indeed, by (\ref{dualrep}) and Lemma \ref{lemma:2}
	\begin{equation}\label{eq:u} u(t) = \int_0^t \sigma(t-s) \sum_{n=1}^\infty a_n w_n(s) \phi_n\, ds \end{equation}
where
	\[ a_n = \left\{ \begin{array}{cl} \lip f,\phi_n \rip & \mbox{ for } n \in J_0\cap \bbn,\\[3mm]
					\dfrac{\lip f,\phi_n \rip}{\lambda_n} &\mbox{ for } n \in J_1 \cap \bbn,
		\end{array} \right. \]
which implies
	\[\calb u' = \calb \sum_{n=1}^\infty a_n  (\sigma(0)+V_{\sigma'})w_n \phi_n = \sum_{n=1}^\infty \lip f, \phi_n \rip (\sigma(0)+V_{\sigma'}) w_n \psi_n. \]
Finally, by (\ref{eq:ptheta3}), for each $k \in \bbn$
	\begin{align*}
		\left \lip \calb u' , \theta_k \right\rip_\hkg &= \sum_{n=1}^\infty \lip f,\phi_n \rip_\lotg \lip (\sigma(0)+V_{\sigma'})w_n \psi_n, \theta_k \rip_\hkg \\
			&= \sum_{n=1}^\infty \lip f,\phi_n \rip_\lotg \lip w_n \psi_n, (\sigma(0) +V_{\sigma'}^*)\theta_k \rip_\hkg \\
			&= \sum_{n=1}^\infty \lip f,\phi_n \rip_\lotg \lip w_n \psi_n, p_k \rip_\hkg \\
			&= \lip f,\phi_k \rip_\calh.
	\end{align*}
\end{proof}

\begin{remark}
Moreover, $\{\theta_k\}$ is also a Riesz sequence. This follows from the fact that $(I+V_K^*)$ is bounded with a bounded inverse so
	\[ |\sigma(0)|\left\| \sum a_k\theta_k \right\| = \left\| (I+V_K^*) \sum a_kp_k \right\| \asymp \left\| \sum a_kp_k \right\|. \]
Finally, any sequence which is biorthogonal to a Riesz sequence must also be a Riesz sequence \cite[p. 36]{young01}. $\{p_k\}$ is biorthogonal to $\{w_n\psi_n\}$ by construction.
\end{remark}

\begin{remark}\label{remark:1}
The $\hkpg$-norm in the lower inequality in Theorem \ref{thm:stability} cannot be replaced by $\hkg$. 
\end{remark}
\begin{proof}
Assume the inequality can be improved. Then by (\ref{eq:u}), $\{y_n\psi_n\}$
forms a Riesz sequence in $\hkg$ where
	\[ y_n(t) = \int_0^t\sigma(t-s)w_n(s) \, ds. \] 
However, in the case of no memory ($M=0$ in (\ref{viscoode})), for $n \in J_1$, $w_n(t) = \sin(\lambda_nt)$ in which case
	\[ \int_0^t \sigma(t-s) \sin(\lambda_ns) \,ds = -\dfrac{1}{\lambda_n}\left(\sigma(0)\cos(\lambda_nt) + \int_0^t \sigma'(t-s) \cos(\lambda_ns) \, ds \right)\]
so $\|y_n\|_{L^2[0,T]} \le C|\lambda_n|^{-1}$. Since $\{z_n\psi_n\}$ is also a Riesz sequence, taking $a_n = \delta_{n,m}$ and applying the upper inequality, we get ($z_n(t) = e^{i\lambda_n t}$)
	\[ T\|\psi_m\|_\calg^2 = \int_0^T \|e^{i\lambda_m t}\psi_m\|_\calg^2 \, dt= \left\|\sum a_n z_n \psi_n\right\|_\hkg^2 \le C. \]
Therefore, $\|\psi_n\| \le C$ which implies
	\begin{equation}\label{eq:yn-decay} \|y_n\psi_n\|_\hkg \le \dfrac{C}{|\lambda_n|}. \end{equation}
However, if $\{y_n\psi_n\}$ was a Riesz sequence, then taking $a_n = \delta_{n,m}$
	\[ \|y_m\psi_m\|_\hkg = \left\| \sum a_n y_n \psi_n \right\|_\hkg \ge c \]
which contradicts (\ref{eq:yn-decay}) since $|\lambda_n| \to \infty$.
\end{proof}

\bibliographystyle{plain}
\bibliography{/home/waton/mega/School/Research/refs-all/refs-all}

\end{document}